\let\doendproof\endproof
\renewcommand\endproof{\hfill\qed\doendproof}
\newtheorem{observation}{Observation}
\newtheorem{prop}{Proposition}
\begin{document}

\title{On vertex-girth-regular graphs: (Non-)existence, bounds and enumeration}

\titlerunning{On vertex-girth-regular graphs}
%

\author{Robert Jajcay\inst{1}, Jorik Jooken\inst{2}, \and
István Porupsánszki\inst{3} }

\authorrunning{Jajcay et al.}

\institute{Department of Algebra and Geometry, Faculty of Mathematics, Physics and Informatics, Mlynsk\' a dolina, 842 48 Bratislava, Slovakia\\
\email{robert.jajcay@fmph.uniba.sk}
\and
Department of Computer Science, KU Leuven Kulak, 8500 Kortrijk, Belgium\\
\email{jorik.jooken@kuleuven.be}
\and
{Department of Geometry and MTA-ELTE Geometric and Algebraic Combinatorics Research Group, Eötvös
Loránd University, 1117 Budapest, Pázmány s. 1/c, Hungary}\\
\email{rupsansz@gmail.com}\\
}

\date{}

\maketitle

\begin{abstract}
A vertex-girth-regular $vgr(v,k,g,\lambda)$-graph is a $k$-regular graph of girth $g$ and order $v$ in which every vertex belongs to exactly $\lambda$ 
cycles of length $g$. While all vertex-transitive graphs are necessarily vertex-girth-regular, the majority of vertex-girth-regular graphs are not
vertex-transitive. Similarly, while many of the smallest $k$-regular graphs of girth $g$, the so-called $(k,g)$-cages, are vertex-girth-regular, infinitely
many vertex-girth-regular graphs of degree $k$ and girth $g$ exist for many pairs $k,g$. Due to these connections, the study of 
vertex-girth-regular graphs promises insights into the relations between the classes of extremal, highly symmetric, and locally regular graphs of given
degree and girth. This paper lays the foundation to such study by investigating the fundamental properties of $vgr(v,k,g,\lambda)$-graphs, specifically the relations necessarily satisfied by the parameters $v,k,g$ and $\lambda$ to admit the existence of a corresponding vertex-girth-regular graph, by presenting constructions of infinite families of $vgr(v,k,g,\lambda)$-graphs, and by establishing lower bounds on the number $v$ of vertices in a $vgr(v,k,g,\lambda)$-graph. It also includes computational results determining the orders of smallest cubic and quartic graphs of small girths.
\keywords{regular graph \and girth \and order \and cage \and local regularity \and vertex-transitivity.}

%
\end{abstract}


\section{Introduction}
\label{sec:introduction}
The motivation for considering vertex-girth-regular graphs comes from two seemingly disconnected areas. 

The first is the Cage Problem; a part of Extremal Graph Theory where one searches for $k$-regular graphs of girth $g$,
$ (k,g) $-graphs, of the smallest possible order; called $ (k,g) ${\em -cages}~\cite{EJ08}. Even though this problem has a large number of practical applications (e.g., in Network Design) and has been studied since the 1940's, very few orders of $(k,g)$-cages are known. This is due, among other reasons, to our lack of understanding of the structure of cages as well as the considerable size of the search spaces associated with searching for smallest graphs of given parameters $k,g$. To fill the existing void, it is often useful to rely on heuristics - observations based on the structure of known cages or smallest known $(k,g)$-graphs. One such insight is that small $(k,g)$-graphs often exhibit a high level of regularity and tend to look similar with respect to each of their vertices. Transforming this observation into a structural description suggests for example that each vertex of a small $(k,g)$-graph is contained in a similar number of \emph{girth-cycles}; cycles of length $g$. This ultimately leads to the study of $(k,g)$ \emph{vertex-girth-regular graphs} which are $k$-regular graphs of girth $g$ in which every vertex is contained in the same number of girth-cycles; which we shall usually denote by $\lambda$. Since our partial aim is to shed light on structural properties of small $(k,g)$-graphs, we also address the question of smallest orders of vertex-girth-regular graphs for a given triple $k,g$ and $\lambda$. Similar questions concerning girth-regularity of regular graphs have been studied in a series of papers focusing on the orders of smallest $k$-regular graphs of girth $g$ in which every \emph{edge} is contained in the same number of girth-cycles, called \emph{edge-girth-regular graphs} $egr(n,k,g,\lambda)$ \cite{DFJR21,JKM18}, and in the paper \cite{PV19} where the authors consider \emph{girth-regular graphs} $gr(n,k,g,\mathbf{a})$: $k$-regular graphs of girth $g$ having the property that the \emph{signature} of every vertex is the same, where the signature $\mathbf{a} = \{a_1, a_2, \ldots, a_k\}$ of a vertex $u$ represents the number of times the $k$ edges adjacent to $u$ are contained in girth-cycles. Clearly, both of the above classes of graphs are also vertex-girth-regular. However, in our definition, we do not make any assumptions about the distribution of girth-cycles among the edges adjacent to a vertex. It is also worth noting that the number of girth-cycles through any vertex in an edge-girth-regular graph is necessarily a multiple of half of the degree, $\frac{k}{2}$.

The second source of inspiration for our study of vertex-girth-regular graphs is the class of vertex-transitive graphs which are necessarily vertex-girth-regular (and much more; since they have the property that the number of cycles of any specific length through each vertex is the same). Thus, in some sense, the study of vertex-girth-regular graphs is the study of the connection between vertex-transitivity and girth-regularity much the same way as the study of edge-girth-regular graphs is connected to edge-transitivity. It is important to note that neither edge- nor vertex-girth-regularity imply edge- or vertex-transitivity; as amply exhibited by our example of an edge-girth-regular tetravalent graph on $20$ vertices which has a trivial automorphism group (see Fig.~\ref{fig:trivialAutomorphism}).

\begin{figure}[h!]
\begin{center}
\begin{tikzpicture}[scale=1.6]
  \def\inter{1.5}
  \def\vertDist{0.5}
  \def\k{4}

  \foreach \i in {1}
    {
        \fill (\inter * \i, 0*\vertDist) circle (1.2pt);
    }
    \foreach \i in {2}
    {
        \fill (\inter * \i, -0.4*\vertDist) circle (1.2pt);
    }
    \foreach \i in {3,...,\k}
    {
        \fill (\inter * \i, 0*\vertDist) circle (1.2pt);
    }

    
    \foreach \i in {0,...,\k}
    {
        \fill ({\inter * (\i+0.5)}, 1*\vertDist) circle (1.2pt);
    }

    
    \foreach \i in {1}
    {
        \fill (\inter * \i, 2*\vertDist) circle (1.2pt);
    }
    \foreach \i in {2}
    {
        \fill (\inter * \i, 1.3*\vertDist) circle (1.2pt);
    }
    \foreach \i in {3,...,\k}
    {
        \fill (\inter * \i, 2*\vertDist) circle (1.2pt);
    }

    \fill ({\inter * (1.8)}, 3*\vertDist) circle (1.2pt);
    
    \foreach \i in {2,...,\k}
    {
        \fill ({\inter * (\i+0.5)}, 3*\vertDist) circle (1.2pt);
    }

    
    \foreach \i in {2}
    {
        \fill ({\inter * (\i+0.5)}, 4*\vertDist) circle (1.2pt);
    }
    
    
    \foreach \i in {0,\k}
    {
        \fill ({\inter * (\i+0.5)}, 5*\vertDist) circle (1.2pt);
    }


    
    \foreach \i in {1}
    {
        \draw (\inter * \i, 0*\vertDist) -- ({\inter * (\i-0.5)}, 1*\vertDist);
    }
    \foreach \i in {2}
    {
        \draw (\inter * \i, -0.4*\vertDist) -- ({\inter * (\i-0.5)}, 1*\vertDist);
    }
    \foreach \i in {3,...,\k}
    {
        \draw (\inter * \i, 0*\vertDist) -- ({\inter * (\i-0.5)}, 1*\vertDist);
    }

    \foreach \i in {1}
    {
        \draw (\inter * \i, 0*\vertDist) -- ({\inter * (\i+0.5)}, 1*\vertDist);
    }
    \foreach \i in {2}
    {
        \draw (\inter * \i, -0.4*\vertDist) -- ({\inter * (\i+0.5)}, 1*\vertDist);
    }
    \foreach \i in {3,...,\k}
    {
        \draw (\inter * \i, 0*\vertDist) -- ({\inter * (\i+0.5)}, 1*\vertDist);
    }


    \foreach \i in {1}
    {
        \draw (\inter * \i, 2*\vertDist) -- ({\inter * (\i-0.5)}, 1*\vertDist);
    }
    \foreach \i in {2}
    {
        \draw (\inter * \i, 1.3*\vertDist) -- ({\inter * (\i-0.5)}, 1*\vertDist);
    }
    \foreach \i in {3,...,\k}
    {
        \draw (\inter * \i, 2*\vertDist) -- ({\inter * (\i-0.5)}, 1*\vertDist);
    }

        \foreach \i in {1}
    {
        \draw (\inter * \i, 2*\vertDist) -- ({\inter * (\i+0.5)}, 1*\vertDist);
    }
    \foreach \i in {2}
    {
        \draw (\inter * \i, 1.3*\vertDist) -- ({\inter * (\i+0.5)}, 1*\vertDist);
    }
    \foreach \i in {3,...,\k}
    {
        \draw (\inter * \i, 2*\vertDist) -- ({\inter * (\i+0.5)}, 1*\vertDist);
    }

    \draw (\inter * 4, 2*\vertDist) -- ({\inter * (4+0.5)}, 3*\vertDist);
    \draw (\inter * 1, 0*\vertDist) -- ({\inter * (4+0.5)}, 3*\vertDist);
    \draw ({\inter * (4+0.5)}, 5*\vertDist) -- ({\inter * (4+0.5)}, 3*\vertDist);
    \draw ({\inter * (2+0.5)}, 4*\vertDist) -- ({\inter * (4+0.5)}, 3*\vertDist);

    \draw ({\inter * (2+0.5)}, 4*\vertDist) -- ({\inter * (3+0.5)}, 3*\vertDist);
    \draw ({\inter * (2+0.5)}, 4*\vertDist) -- ({\inter * (1.8)}, 3*\vertDist);
    \draw ({\inter * (2+0.5)}, 4*\vertDist) -- (\inter * 4, 0*\vertDist);

    \draw (\inter * 3, 2*\vertDist) -- ({\inter * (2+0.5)}, 3*\vertDist);
    \draw ({\inter * (1.8)}, 3*\vertDist) -- ({\inter * (2+0.5)}, 3*\vertDist);
    \draw ({\inter * (4+0.5)}, 1*\vertDist) -- ({\inter * (2+0.5)}, 3*\vertDist);
    \draw ({\inter * (4+0.5)}, 5*\vertDist) -- ({\inter * (2+0.5)}, 3*\vertDist);

    \draw ({\inter * (0+0.5)}, 5*\vertDist) -- ({\inter * (1.8)}, 3*\vertDist);
    \draw ({\inter * (0+0.5)}, 5*\vertDist) -- ({\inter * (0+0.5)}, 1*\vertDist);
    \draw ({\inter * (0+0.5)}, 5*\vertDist) -- (\inter * 2, 1.3*\vertDist);
    \draw ({\inter * (0+0.5)}, 5*\vertDist) -- ({\inter * (4+0.5)}, 5*\vertDist);

    \draw (\inter * 1, 2*\vertDist) -- (\inter * 3, 2*\vertDist);
    \draw (\inter * 1, 2*\vertDist) -- ({\inter * (4+0.5)}, 1*\vertDist);

    \draw ({\inter * (0+0.5)}, 1*\vertDist) -- ({\inter * (3+0.5)}, 3*\vertDist);
    \draw (\inter * 4, 2*\vertDist) -- ({\inter * (3+0.5)}, 3*\vertDist);
    \draw (\inter * 2, 1.3*\vertDist) -- ({\inter * (3+0.5)}, 3*\vertDist);

    \draw (\inter * 2, -0.4*\vertDist) -- ({\inter * (1.8)}, 3*\vertDist);

    \draw (\inter * 2, -0.4*\vertDist) -- (\inter * 4, 0*\vertDist);

    \draw (\inter * 1, 0*\vertDist) -- (\inter * 3, 0*\vertDist);
    
    \draw ({\inter * (4+0.5)}, 5*\vertDist) -- (\inter * 3, 0*\vertDist);

\end{tikzpicture}
\end{center}
\caption{An $egr(20,4,4,1)$-graph which is asymmetric (it only has the trivial automorphism).}
\label{fig:trivialAutomorphism}
\end{figure}
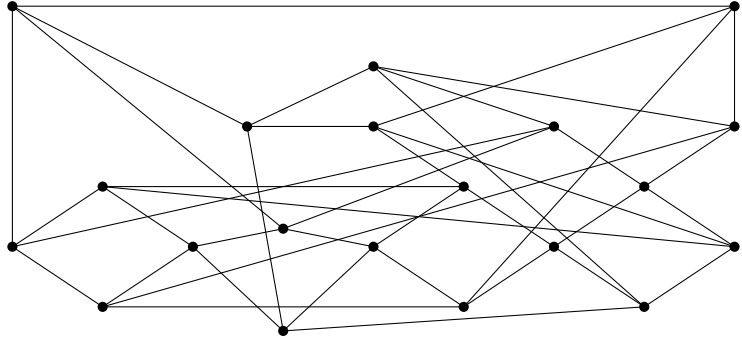

\section{Preliminaries and notation}
We first recall the Moore bound $M(k,g)$, which provides a lower bound for the order of a $(k,g)$-graph:


\begin{observation}[Folklore]
    Let $G$ be a $(k,g)$-graph of order $n$. Then, we have:
\begin{align*}
    n \geq M(k,g) = \begin{cases}
        1+\sum_{i=0}^{(g-3)/2} k(k-1)^i& \textrm{if } g\textrm{ is odd},\\
        2\sum_{i=0}^{(g-2)/2} (k-1)^i& \textrm{if } g\textrm{ is even}.\\
    \end{cases}
\end{align*}
\end{observation}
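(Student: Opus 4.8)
The plan is to use the classical breadth-first counting argument, splitting into the two parity cases according to whether $g$ is odd or even. In both cases the core idea is identical: fix a root (a single vertex when $g$ is odd, a single edge when $g$ is even), grow a breadth-first search outward, and argue that up to a suitable radius all the vertices encountered are pairwise distinct; then $n$ is at least the number of vertices the search produces, which I would compute level by level.

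For odd $g$, I would write $g = 2r+1$ with $r = (g-1)/2$, fix a vertex $v$, and group vertices by their distance from $v$. At distance $0$ there is $1$ vertex; at distance $1$ there are $k$, the neighbors of $v$; and I would show that each vertex at distance $i-1$ (for $2 \le i \le r$) contributes exactly $k-1$ new neighbors at distance $i$, since one incident edge leads back toward $v$. This yields $k(k-1)^{i-1}$ vertices at level $i$, and summing over $0 \le i \le r$ gives the claimed bound $1 + \sum_{i=0}^{(g-3)/2} k(k-1)^i$.

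For even $g$, I would write $g = 2r$ with $r = g/2$, fix an edge $\{u,v\}$, and root the search at this edge. Level $0$ now consists of the two endpoints $u,v$; each contributes $k-1$ further neighbors, so level $1$ has $2(k-1)$ vertices, and in general level $i$ for $1 \le i \le r-1$ has $2(k-1)^i$ vertices, summing to $2\sum_{i=0}^{(g-2)/2}(k-1)^i$.

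The crux of both arguments—and the step I expect to require the most care—is the distinctness claim, which is exactly where the girth hypothesis is used. If some vertex $w$ were reached by two different search paths, then concatenating those paths (together with the root edge $\{u,v\}$ in the even case) produces a closed walk, and hence a cycle. In the odd case two paths of length at most $r$ give a cycle of length at most $2r = g-1$; in the even case two same-side paths give length at most $2(r-1) = g-2$, while two opposite-side paths give length at most $(r-1)+1+(r-1) = g-1$. In every case the resulting cycle is strictly shorter than $g$, contradicting the girth assumption, so all counted vertices are distinct. The only subtlety I would need to confirm is that two genuinely distinct search branches yield a nondegenerate cycle rather than a trivial closed walk, but the length bounds above are the heart of the matter.
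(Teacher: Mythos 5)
Your proposal is correct: the vertex-rooted (odd $g$) and edge-rooted (even $g$) breadth-first counting argument, with distinctness of levels forced by the girth, is exactly the standard proof of this folklore bound, and it is the same Moore-tree structure the paper itself invokes (without proof) via the trees $\mathcal{T}^{u_1}_{k,\frac{g-1}{2}}$ and $\mathcal{T}^{u_1,u_2}_{k,\frac{g}{2}-1}$ in its Proposition 1. The one subtlety you flag—that two distinct paths to the same vertex yield a genuine cycle of length less than $g$, not merely a degenerate closed walk—is handled by the standard observation that the symmetric difference of two distinct paths with common endpoints contains a cycle of at most their combined length, so your argument goes through.
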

If a $(k,g)$-graph attains this bound, we call it a \emph{Moore graph}.

As we already stated in the Introduction, for integers $v$, $k$, $g$ and $\lambda$, a vertex-girth-regular $vgr(v,k,g,\lambda)$-graph is a $k$-regular graph with girth $g$ on $v$ vertices such that every vertex is contained in exactly $\lambda$ cycles of length $g$. 

It is important to note that not every triple $(k,g,\lambda)$ admits the existence of a $vgr(v,k,g,\lambda)$-graph. For example, we can derive the following obvious upper bounds on $\lambda$. Several of the proofs in this paper will follow the same setup as the proof of the following proposition:

\begin{prop}\label{prop:elementalProperties}
    Let $G$ be a $vgr(v,k,g,\lambda)$-graph. Then the following hold:
    \begin{enumerate}[(i)]
      \item if $g$ is odd, then $\lambda \leq \frac{k(k-1)^{\frac{g-1}{2}}}{2}$; with equality if and only if $G$ is a Moore graph;
    \item if $g$ is even, then $\lambda \leq \frac{k(k-1)^{\frac{g}{2}}}{2}$; with equality if and only if $G$ is a Moore graph.
    \end{enumerate}
\end{prop}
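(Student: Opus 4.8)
The plan is to fix an arbitrary vertex $u$ and count the girth-cycles through $u$ by exploiting the local tree structure forced by the girth condition. Write $g = 2r+1$ in the odd case and $g = 2r$ in the even case, and consider the distance partition of $V(G)$ into levels $L_j = \{w : d(u,w) = j\}$, with $L_0 = \{u\}$.

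First I would establish the tree structure of the neighbourhood. Observing that a second shortest path to a vertex at distance $j$, or a non-tree edge joining two vertices at distance strictly below the critical level, would close a cycle of length $<g$, I would show that every vertex at distance $j \leq r$ (odd case), respectively $j \leq r-1$ (even case), is joined to $u$ by a \emph{unique} shortest path, whence $|L_j| = k(k-1)^{j-1}$ at these levels, and that the only non-tree edges within the relevant ball lie inside the top level. Next comes the combinatorial heart: a bijection between the girth-cycles through $u$ and certain local configurations. In the odd case, each girth-cycle has a unique edge opposite $u$ whose two endpoints lie in $L_r$, and conversely every edge inside $L_r$ closes, together with the two unique (and internally disjoint) tree-paths to its endpoints, exactly one girth-cycle; hence $\lambda$ equals the number of edges inside $L_r$. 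In the even case, each girth-cycle has a unique antipodal vertex $w \in L_r$ and is recovered from an unordered pair of neighbours of $w$ in $L_{r-1}$, so that $\lambda = \sum_{w \in L_r}\binom{d_w}{2}$, where $d_w = |N(w) \cap L_{r-1}|$.

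Finally I would derive the bound. Each vertex of $L_r$ sends exactly one edge to its parent, hence at most $k-1$ edges staying inside $L_r$, giving $\lambda \leq \tfrac12 (k-1)|L_r| = \tfrac{k(k-1)^r}{2}$ in the odd case. In the even case, using $\binom{d_w}{2} = \tfrac{d_w(d_w-1)}{2} \leq \tfrac{(k-1)d_w}{2}$ together with $\sum_{w} d_w = (k-1)|L_{r-1}| = k(k-1)^{r-1}$ (the number of edges between $L_{r-1}$ and $L_r$) yields the same value $\tfrac{k(k-1)^r}{2}$. In both cases equality forces every top-level vertex to send all its remaining edges back one level, i.e. $L_{r+1} = \emptyset$, so $G$ has diameter $r = \lfloor g/2\rfloor$; a direct count of $\sum_j |L_j|$ then shows $v = M(k,g)$, so $G$ is a Moore graph, while conversely Moore graphs are immediately seen to attain equality.

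The step I expect to be the main obstacle is the bijection/counting: one must check carefully that the two tree-paths forming a girth-cycle are genuinely internally disjoint — which again follows from the girth lower bound, since any shared vertex other than $u$ would close a shorter cycle — so that distinct local configurations correspond to distinct girth-cycles and no cycle is counted twice.
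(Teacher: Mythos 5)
Your proposal is correct, and it splits parity-wise into one argument that matches the paper and one that does not. For odd girth your proof is essentially the paper's: root a Moore tree at a vertex $u$, identify girth-cycles through $u$ bijectively with the ``horizontal'' edges joining two vertices of the top level, and bound their number by $\tfrac{1}{2}(k-1)\cdot k(k-1)^{(g-3)/2}$ using that each top-level vertex spends one edge on its parent.

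For even girth you take a genuinely different route. The paper roots its Moore tree at an \emph{edge} $u_1u_2$ and partitions the girth-cycles through $u_1$ into three classes: $\mathcal{A}_{u_1}$ (cycles through $u_1u_2$, i.e.\ using an edge from $D_{u_1}$ to $D_{u_2}$), $\mathcal{B}_{u_1}$ (cycles closing at a common vertex of $D_{u_2}$), and $\mathcal{C}_{u_1}$ (cycles closing at a vertex outside the tree), bounding each class separately. You instead keep the vertex-rooted distance partition for both parities and count via the antipodal vertex: every girth-cycle through $u$ determines a unique $w \in L_{g/2}$ together with an unordered pair of its neighbours in $L_{g/2-1}$, and conversely every such pair closes a girth-cycle, so that $\lambda = \sum_{w \in L_{g/2}} \binom{d_w}{2}$ with $d_w = |N(w) \cap L_{g/2-1}|$; the bound then follows from $\binom{d_w}{2} \leq \tfrac{(k-1)d_w}{2}$ and $\sum_w d_w = k(k-1)^{g/2-1}$. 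Your version is more uniform across the two parities, produces an exact identity for $\lambda$ rather than only class-wise upper bounds, and makes the equality analysis cleaner: term-by-term equality forces $d_w = k$ for every $w \in L_{g/2}$, hence $L_{g/2+1} = \emptyset$ and $v = M(k,g)$ by direct count. What the paper's edge-rooted decomposition buys is that it isolates the number of girth-cycles through a fixed \emph{edge} (the class $\mathcal{A}_{u_1}$), and this three-way partition is reused verbatim in the proof of Theorem~\ref{non-exist-even}; your argument, while cleaner here, would not feed directly into that later proof. Finally, the disjointness issue you flag as the main obstacle is indeed the crux, and your girth argument for it is the right one: two shortest paths branching at a vertex other than $u$ would close a cycle of length strictly less than $g$, which is exactly what guarantees that your local configurations and girth-cycles are in bijection.
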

\begin{proof}
(i): Consider any vertex $u_1 \in V(G)$ and let $\mathcal{T}^{u_1}_{k,\frac{g-1}{2}}$ be a subgraph of $G$ such that $V(\mathcal{T}^{u_1}_{k,\frac{g-1}{2}})$ consists of all vertices which are at distance at most $\frac{g-1}{2}$ from $u_1$ and $E(\mathcal{T}^{u_1}_{k,\frac{g-1}{2}})$ contains all edges of $E(G)$ between vertices of $V(\mathcal{T}^{u_1}_{k,\frac{g-1}{2}})$ except for edges between vertices at distance $\frac{g-1}{2}$ from $u_1$. Since $G$ has girth $g$, $\mathcal{T}^{u_1}_{k,\frac{g-1}{2}}$ is a tree (called a Moore tree) and $|V(\mathcal{T}^{u_1}_{k,\frac{g-1}{2}})|=M(k,g)$. Now there is a one-to-one correspondence between girth-cycles of $G$ containing $u_1$ and edges that have two endpoints that are leaves of $\mathcal{T}^{u_1}_{k,\frac{g-1}{2}}$. Since each vertex of $G$ has degree $k$ and $\mathcal{T}^{u_1}_{k,\frac{g-1}{2}}$ has $k(k-1)^{\frac{g-3}{2}}$ leaves, we obtain that the number of edges between two leaves of $\mathcal{T}^{u_1}_{k,\frac{g-1}{2}}$ is bounded from above by $\frac{k(k-1)^{\frac{g-1}{2}}}{2}$ and this bound is attained if and only if $G$ is a Moore graph.\\
(ii): The even girth case uses a similar setup as the odd girth case, but requires different arguments. Consider any edge $u_1u_2 \in E(G)$ and let $\mathcal{T}^{u_1,u_2}_{k,\frac{g}{2}-1}$ be the subtree of $G$ consisting of the edge $u_1u_2$ and  two disjoint trees rooted at respectively $u_1$ and $u_2$ such that the leaves of the two trees are at distance $\frac{g}{2}-1$ from $u_1$ and $u_2$, respectively. Note that $|V(\mathcal{T}^{u_1,u_2}_{k,\frac{g}{2}-1})|=M(k,g)$. Let $D_{u_1}$ and $D_{u_2}$ be the set of leaves of $\mathcal{T}^{u_1,u_2}_{k,\frac{g}{2}-1}$ at distance $\frac{g}{2}-1$ from $u_1$ and $u_2$, respectively. Let $v_1, v_2, \ldots, v_{k-1}$ be the $k-1$ neighbors of $u_1$ different from $u_2$ and let $D_{u_1,1}, D_{u_1,2}, \ldots, D_{u_1,k-1}$ be the set of leaves at distance $\frac{g}{2}-2$ from $v_1, v_2, \ldots, v_{k-1}$, respectively. The set of girth-cycles containing $u_1$ can be partitioned in three sets $\mathcal{A}_{u_1}, \mathcal{B}_{u_1}$ and $\mathcal{C}_{u_1}$, where $\mathcal{A}_{u_1}$ is the set of girth-cycles that contain exactly one edge with an endpoint in $D_{u_1}$ and one endpoint in $D_{u_2}$, $\mathcal{B}_{u_1}$ is the set of girth-cycles that contain exactly two edges with one of their endpoints in $D_{u_1}$ and a shared endpoint in $D_{u_2}$, and $\mathcal{C}_{u_1}$ is the set of girth-cycles that contain exactly two edges with one endpoint in $D_{u_1}$ and one endpoint in $V(G) \setminus V(\mathcal{T}^{u_1,u_2}_{k,\frac{g}{2}-1})$. In a similar fashion, we partition the set of girth-cycles containing $u_2$ in three sets $\mathcal{A}_{u_2}$, $\mathcal{B}_{u_2}$ and $\mathcal{C}_{u_2}$. Note that each girth-cycle in $\mathcal{A}_{u_1}$ contains the edge $u_1u_2$, whereas none of the girth-cycles in $\mathcal{B}_{u_1}$ and $\mathcal{C}_{u_1}$ contain $u_1u_2$. Since every vertex in $G$ has degree $k$ and $|D_{u_1}|=(k-1)^{\frac{g-2}{2}}$, we obtain $|\mathcal{A}_{u_1}| \leq (k-1)^{\frac{g}{2}}$. Since $G$ has girth $g$, there does not exist a vertex $v \in (V(G) \setminus V(\mathcal{T}^{u_1,u_2}_{k,\frac{g}{2}-1})) \cup D_{u_2}$ and an integer $1 \leq i \leq k-1$ such that $v$ is adjacent to two distinct vertices in $D_{u_1,i}$. Therefore, every vertex $v \in (V(G) \setminus V(\mathcal{T}^{u_1,u_2}_{k,\frac{g}{2}-1})) \cup D_{u_2}$ is adjacent to at most $k-1$ vertices in $D_{u_1}$. Since each vertex in $G$ has degree $k$, we obtain $|\mathcal{B}_{u_1}| + |\mathcal{C}_{u_1}| \leq (k-1)^{\frac{g-2}{2}} \frac{(k-1)(k-2)}{2}$. This yields $\lambda = |\mathcal{A}_{u_1}|+|\mathcal{B}_{u_1}|+|\mathcal{C}_{u_1}| \leq (k-1)^{\frac{g}{2}}+(k-1)^{\frac{g-2}{2}} \frac{(k-1)(k-2)}{2}=\frac{k(k-1)^{\frac{g}{2}}}{2}$ and if equality occurs, $|\mathcal{A}_{u_1}|=(k-1)^{\frac{g}{2}}$ and therefore $|\mathcal{B}_{u_1}|=(k-1)^{\frac{g-2}{2}} \frac{(k-1)(k-2)}{2}$ and $|\mathcal{C}_{u_1}|=0$, which implies that $G$ is a Moore graph. Conversely, if $G$ is a Moore graph we have $|\mathcal{A}_{u_1}|=(k-1)^{\frac{g}{2}}$, $|\mathcal{B}_{u_1}|=(k-1)^{\frac{g-2}{2}} \frac{(k-1)(k-2)}{2}$, $|\mathcal{C}_{u_1}|=0$ and  $\lambda = \frac{k(k-1)^{\frac{g}{2}}}{2}$. 
\end{proof}

Beside the upper bounds on $\lambda$ derived from the properties of cages stated in Proposition~\ref{prop:elementalProperties}, vertex-girth-regular graphs also do not exist in cases when $\lambda$ is close to the upper bounds stated in there. 

For example, when considering cubic vertex-girth-regular graphs of girth $3$, it is easy to see that $K_4$ is a $vgr(4,3,3,3)$-graph. Moreover, 
it is the unique $3$-regular graph of girth $3$ and $\lambda = 3 $, which is the maximal $\lambda $ in any $ vgr(n,3,3,\lambda)$-graph. 
Also, it is not very hard to construct a $3$-regular graph of girth $3$ and $\lambda = 1 $, as shown in Fig.~\ref{fig:(6,3,3,1)}.

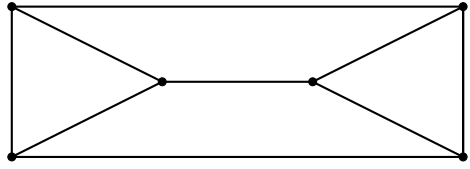
\begin{figure}
\begin{center}
\begin{tikzpicture}
\draw[fill=black] (0,0) circle (1.5pt);
\draw[fill=black] (6,0) circle (1.5pt);
\draw[fill=black] (2,1) circle (1.5pt);
\draw[fill=black] (4,1) circle (1.5pt);
\draw[fill=black] (0,2) circle (1.5pt);
\draw[fill=black] (6,2) circle (1.5pt);
\draw[thick] (0,0) -- (6,0) -- (6,2) -- (0,2) -- (0,0) -- (2,1) -- (0,2);
\draw[thick] (2,1) -- (4,1) -- (6,2) -- (6,0) -- (4,1);
\end{tikzpicture}
\end{center}
\caption{A $vgr(6,3,3,1)$-graph.}
\label{fig:(6,3,3,1)}
\end{figure}

However, there is no cubic vertex-girth-regular graph of girth $3$ in which every vertex belongs to $2$ girth-cycles. This is a consequence of the following lemma.

\begin{lemma}\label{easy}
Let $ k \geq 3 $.
There is no integer $v$ such that a $vgr(v,k,3,{k \choose 2}-1)$-graph exists. \end{lemma}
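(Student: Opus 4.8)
\emph{Proof proposal.} The plan is to exploit the elementary correspondence, already used in the proof of Proposition~\ref{prop:elementalProperties}(i), between the girth-cycles (here, triangles) through a vertex $u$ and the edges of the graph induced on its neighbourhood $N(u)$: when $g=3$ the Moore tree $\mathcal{T}^{u}_{k,1}$ is simply $u$ together with its $k$ neighbours as leaves, and a triangle through $u$ is exactly a pair of adjacent leaves. Hence in a $vgr(v,k,3,\lambda)$-graph the parameter $\lambda$ equals the number of edges of $G[N(u)]$ for every vertex $u$. Assuming for contradiction that a $vgr\!\left(v,k,3,\binom{k}{2}-1\right)$-graph $G$ exists, this correspondence gives that $G[N(u)]$ has exactly $\binom{k}{2}-1$ edges, i.e.\ $G[N(u)]\cong K_k$ minus a single edge, for every vertex $u$.

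Next I would fix a vertex $u$ with $N(u)=\{v_1,\dots,v_k\}$, let $v_1v_2$ be the unique non-edge of $G[N(u)]$, and read off the local degrees inside the set $\{u\}\cup N(u)$. Each $v_i$ with $i\ge 3$ is adjacent to $u$ and to every other vertex of $N(u)$, so it already has degree $k$ and is therefore \emph{saturated}: its neighbourhood is exactly $\{u\}\cup\bigl(N(u)\setminus\{v_i\}\bigr)$, with no neighbours outside $\{u\}\cup N(u)$. The vertex $u$ itself is saturated, with $N(u)=\{v_1,\dots,v_k\}$. By contrast $v_1$ is adjacent to $u$ and to $v_3,\dots,v_k$ but not to $v_2$, so it has degree $k-1$ inside $\{u\}\cup N(u)$ and thus possesses exactly one further neighbour $w$; since $w\neq v_2$ (as $v_1v_2$ is a non-edge), this $w$ lies outside $\{u\}\cup N(u)$.

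The final step is to recount the triangles through $v_1$ by counting the edges of $G[N(v_1)]$, where $N(v_1)=\{u,v_3,\dots,v_k,w\}$. The $k-1$ vertices $u,v_3,\dots,v_k$ are pairwise adjacent (the only missing edge of $G[N(u)]$ was $v_1v_2$), so they span a $K_{k-1}$ contributing $\binom{k-1}{2}$ edges. The crucial point — and the step I expect to carry the whole argument — is that $w$ contributes no edge at all to $G[N(v_1)]$: because $u$ and each $v_i$ with $i\ge 3$ are saturated with neighbourhoods lying entirely inside $\{u\}\cup N(u)$, none of them can be adjacent to the external vertex $w$. Hence $G[N(v_1)]$ has exactly $\binom{k-1}{2}$ edges, which forces $\lambda=\binom{k-1}{2}$.

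Combining this with the standing assumption $\lambda=\binom{k}{2}-1$ and the identity $\binom{k}{2}-\binom{k-1}{2}=k-1$ yields $k-1=1$, i.e.\ $k=2$, contradicting $k\ge 3$. I anticipate no serious obstacle beyond carefully justifying the saturation claims of the previous paragraph, which is exactly what rules out any hidden edge through $w$; the remaining arithmetic is routine.
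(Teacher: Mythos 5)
Your proposal is correct and follows essentially the same route as the paper's proof: fix a vertex $u$, observe that exactly one pair of its neighbours is non-adjacent, note the deficient neighbour $v_1$ must have an external neighbour $w$ which (by saturation of $u,v_3,\dots,v_k$) lies in no triangle with $v_1$, and conclude that $v_1$ lies in only $\binom{k-1}{2}$ triangles, contradicting $\binom{k}{2}-1$. The only cosmetic difference is that you compute the triangle count through $v_1$ exactly and derive $k=2$, whereas the paper uses the upper bound $\binom{k-1}{2}<\binom{k}{2}-1$ for $k\ge 3$; the substance is identical.
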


\begin{proof}
It is easy to see that the complete graph $K_{k+1}$ is a $vgr(k+1,k,3,{k \choose 2})$-graph, and that no $\lambda $ in a $ vgr(v,k,3,\lambda)$-graph is greater than $ {k \choose 2}$. Consider a potential $vgr(v,k,3,{k \choose 2}-1)$-graph $G$. 
Let $u$ be any vertex in $G$, and let $v_1,v_2, \ldots, v_k$ be the 
neighbors of $u$. In similarity to $K_{k+1}$, any pair of distinct neighbors of $u$ but exactly one must be adjacent (recall that $u$ is assumed to be contained in ${k \choose 2}-1$ triangles). Without loss of generality, we may assume that $v_1$ is the vertex that is not connected to all the other neighbors of $u$. As $v_1$ is assumed to be of degree $k$, it must have a neighbor $w$ different from the vertices $u,v_1,v_2,\ldots,v_k$.
Further note that any triangle containing $v_1$ must contain two adjacent neighbors of $v_1$, while $w$ is not adjacent to any of the neighbors of $v_1$
among the vertices $u,v_2,\ldots,v_k$ as they are already assumed to be of degree $k$. This means that $v_1$ is contained in at most $ {k-1 \choose 2} $ triangles. Since $ k \geq 3 $,
${k-1 \choose 2} < {k \choose 2} - 1 $. Therefore $G$ is not a $vgr(v,k,3,{k \choose 2}-1)$-graph; which completes the argument.
\end{proof}

Thus, unlike the case of $(k,g)$-graphs which exist for any pair of parameters
$ k,g \geq 3 $ \cite{S63}, the question of the existence of at least
one $vgr(v,k,g,\lambda)$-graph for a given triple $(k,g,\lambda)$ necessarily precedes the question of the order of a smallest such graph. That is why we begin our paper with Section~\ref{sec:Existence} in which we show the 
existence of $vgr(v,k,g,\lambda)$-graphs for large classes of triples $(k,g,\lambda)$. This is followed by Section~\ref{sec:LBounds} where we derive several natural lower bounds on the orders of $vgr(v,k,g,\lambda)$-graphs.
In analogy to the Cage Problem, we define $n(k,g,\lambda)$ to be {\em the smallest integer $v$ such that a $vgr(v,k,g,\lambda)$-graph exists} (or $\infty$ otherwise), and 
similarly, $n_2(k,g,\lambda)$ to be {\em the smallest integer $v$ such that a {\em bipartite} $vgr(v,k,g,\lambda)$ exists} (and $\infty$ otherwise). 
We then present further non-existence results in Section~\ref{sec:nonExistence}, and conclude the paper with a number of computational results in which we determine the orders of smallest vertex-girth-regular graphs for various sets of small parameter triples $(k,g,\lambda)$.

Before exiting this section, let us revisit Lemma~\ref{easy}. As shown by the existence of a $vgr(6,3,3,1)$-graph, at least in case of $k=3$, the lemma cannot be strengthened. Even though we were unable to find a general proof for the claim that no $ vgr(v,k,3,{k \choose 2} - \epsilon) $-graphs exist for $ 0 < \epsilon < \frac{k-1}{2} $, the result appears feasible
(especially in view of the analogous Theorem~\ref{non-exist-odd}). 
However, as shown by the existence of a $vgr(6,4,3,4)$-graph, the upper bound $ \epsilon < \frac{k-1}{2} $ is sharp and cannot be replaced by $ \epsilon \leq \frac{k-1}{2} $ 
appearing in Theorem~\ref{non-exist-odd}. 
Finally note that $ vgr(2k,k,3,{k \choose 2} - (k-1)) $-graphs 
exist for all $k \geq 3$. They can be constructed in the same way as the
$vgr(6,3,3,1)$-graph depicted above by joining two copies of the complete graph $K_k$ via a perfect matching connecting each vertex in one of the copies to exactly one vertex in the other.

\section{Existence results} \label{sec:Existence}

In contrast to Lemma~\ref{easy}, in this section we will show that $vgr(v,k,g,\lambda)$-graphs exist in many cases. We start by showing, based on the idea of generalized truncation, that one can construct vertex-girth-regular graphs of increasing degree starting from an arbitrary $vgr(v,k,g,\lambda)$-graph.

\begin{prop}\label{prop:generalizedTruncation}
If a $vgr(v,k,g,\lambda)$-graph exists, then there exist infinitely many integers $v'$ such that a $vgr(v',k+1,g,\lambda)$-graph also exists.
\end{prop}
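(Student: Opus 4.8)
The plan is to realize the promised graphs as \emph{generalized truncations}. Let $G$ be the given $vgr(v,k,g,\lambda)$-graph and let $H$ be an auxiliary \emph{$v$-regular} graph whose girth exceeds $g$; such graphs exist, of arbitrarily large (hence infinitely many) orders $t$, by the unrestricted existence result for regular graphs of prescribed girth~\cite{S63} applied with degree $v$ and girth $g+1$. I would form a new graph $G'$ by replacing every vertex $x$ of $H$ with its own copy $G_x$ of $G$, fixing for each $x$ a bijection between the $v$ darts emanating from $x$ and the $v$ vertices of $G_x$, and then, for every edge $xy \in E(H)$, joining the vertex of $G_x$ assigned to the dart $(x,y)$ to the vertex of $G_y$ assigned to the dart $(y,x)$ by a single new edge, which I will call a \emph{link edge}. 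Since $H$ is $v$-regular, each of the $v$ vertices of a copy receives exactly one link edge, so every vertex of $G'$ has degree $k+1$, and $G'$ has order $v'=v\cdot|V(H)|=tv$; as $t$ ranges over the admissible orders of $H$, this produces infinitely many values of $v'$.

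Next I would establish the girth and the cycle count simultaneously via a projection argument. There is an obvious projection $\pi$ from $G'$ to $H$ sending each vertex to the index of the copy containing it; it collapses each $G_x$ to the vertex $x$ and sends each link edge to the corresponding edge of $H$. Consider any cycle $C$ of $G'$ that uses at least one link edge. Because the copies are pairwise vertex-disjoint and the only edges between distinct copies are link edges, $C$ cannot close up using a single link edge, and its image under $\pi$ is a closed walk in $H$. As $C$ is a cycle and distinct edges of $H$ correspond to distinct link edges, the link edges of $C$ map injectively to edges of $H$; hence this closed walk is edge-simple, its edge set is a nonempty subgraph in which every vertex has even degree, and it therefore contains a cycle of $H$. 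Consequently $C$ uses at least $\mathrm{girth}(H)>g$ link edges and has length strictly greater than $g$. In particular no cycle of length at most $g$ meets two different copies, so every shortest cycle lies inside a single copy $G_x$; since $G$ has girth $g$ this shows $\mathrm{girth}(G')=g$, and moreover every $g$-cycle of $G'$ lies entirely within one copy.

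It then remains to count girth-cycles through a fixed vertex $(u,x)$ of $G'$. By the previous step every $g$-cycle through $(u,x)$ is contained in $G_x\cong G$ and uses none of the link edges; conversely each of the $\lambda$ girth-cycles of $G$ through $u$ survives intact as a $g$-cycle of $G'$ through $(u,x)$. Hence every vertex of $G'$ lies on exactly $\lambda$ girth-cycles, so $G'$ is the desired $vgr(tv,\,k+1,\,g,\,\lambda)$-graph.

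The one genuinely delicate point is the projection argument guaranteeing that crossing between copies is expensive: everything hinges on choosing $H$ with girth larger than $g$ so that any excursion through link edges is forced to traverse a whole cycle of $H$, and on the fact that each copy-vertex carries only a single link edge (which, once $\mathrm{girth}(H)>g\ge 3$, also automatically rules out loops and multiple edges in $H$ and hence any accidental short cycle in $G'$). Everything else---the degree computation, the preservation of $\lambda$, and the existence of suitable auxiliary graphs $H$ of infinitely many orders---is then routine.
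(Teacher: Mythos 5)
Your proof is correct and takes essentially the same approach as the paper's: both realize the new graphs as generalized truncations of an auxiliary $v$-regular graph of sufficiently large girth (existence via Sachs~\cite{S63}) and then argue that every cycle of length at most $g$ must stay inside a single copy of $G$, so girth and the per-vertex count $\lambda$ are preserved. The only difference is quantitative: you demand $\mathrm{girth}(H)>g$ and count only link edges, whereas the paper only needs girth strictly larger than $g/2$, since any cycle visiting several copies also traverses at least one internal edge per copy visited, making its length at least twice that of its projection.
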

\begin{proof}
    Let $G$ be a $vgr(v,k,g,\lambda)$-graph. Consider any $v$-regular graph $G'$ of girth strictly larger than $g/2$ (such a graph exists for any choice of $v$ and $g$ as shown in~\cite{S63}). Construct the graph $H$ by generalized truncation: $H$ is obtained by replacing every vertex in $G'$ by a copy of the graph $G$ (if $u$ is a vertex of $G'$ with neighbors $w_1, w_2, \ldots, w_v$ and $x_1, x_2, \ldots, x_v$ are the vertices of $G$ in an arbitrary order, then the vertex $u$ is replaced by the graph $G$ and the edges $w_1x_1, w_2x_2, \ldots, w_vx_v$ are added). Now each cycle in $H$ corresponds either to a cycle in $G$ or to a cycle in $G'$ in which each vertex is replaced by a path consisting of at least two vertices. Therefore $H$ is a $(k+1)$-regular graph with girth $g$ such that each vertex is contained in $\lambda$ girth-cycles.
\end{proof}

Since every girth-cycle of a graph containing a vertex $u$ contains exactly two edges incident with $u$, every edge-girth-regular graph is also vertex-girth-regular.
\begin{observation}
\label{obs:EgrToVgr}
    If $G$ is an $egr(v,k,g,\lambda)$-graph, then $G$ is a $vgr(v,k,g,\frac{k\lambda}{2})$-graph.
\end{observation}

We now recall three theorems from \cite{JKM18} concerning existence of edge-girth-regular graphs that we will use later:
\begin{theorem}[Th. 3.4 in~\cite{JKM18}]
\label{th:egrGirthAtLeast6}
For every $k \geq 3$ and every $g \geq 6$, there exist infinitely many $egr(v, k, g, 2)$-graphs.
\end{theorem}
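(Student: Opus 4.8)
The plan is to realize the required edge-girth-regular graphs as finite quotients of the regular planar tessellation of type $\{g,k\}$, i.e. the tessellation of the Euclidean or hyperbolic plane by $g$-gons meeting $k$ at every vertex. Since $k \geq 3$ and $g \geq 6$ force $\frac{1}{g} + \frac{1}{k} \leq \frac{1}{2}$, with equality only in the flat case $(g,k) = (6,3)$, this tessellation always exists: it lives on $\mathbb{E}^2$ when $(g,k)=(6,3)$ (the honeycomb) and on the hyperbolic plane $\mathbb{H}^2$ in all other cases. Its $1$-skeleton $\mathcal{T}$ is a $k$-regular graph whose faces are $g$-cycles; moreover, because the ambient plane is simply connected, every cycle bounds a union of faces, and a discrete isoperimetric (Gauss--Bonnet) estimate shows its length is at least $g$, with equality exactly for the faces. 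Hence $\mathcal{T}$ has girth $g$, its only girth-cycles are the faces, and each edge lies in precisely the two faces incident to it; so $\mathcal{T}$ behaves as an ``infinite $egr(\cdot,k,g,2)$-graph''.

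To obtain finite graphs I would quotient $\mathcal{T}$ by suitable subgroups of its symmetry group. Let $G = \mathrm{Aut}(\mathcal{T})$, a cocompact, properly discontinuous group of isometries of the plane (a Euclidean crystallographic group when $(g,k)=(6,3)$, a cocompact Fuchsian/NEC group otherwise). For a torsion-free finite-index subgroup $\Gamma \leq G$ the quotient $\mathcal{T}/\Gamma$ is a finite $k$-regular graph carrying an equivelar map of type $\{g,k\}$ on the closed surface $(\text{plane})/\Gamma$. The key is to control the \emph{systole} of $\Gamma$ (the length of its shortest nontrivial translation): if the systole exceeds $2g$, then every closed walk of length $g$ in $\mathcal{T}/\Gamma$ lifts to a genuine cycle in $\mathcal{T}$, the quotient is a simple graph, and each $g$-cycle is the image of a face. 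Consequently $\mathcal{T}/\Gamma$ has girth exactly $g$ and every edge lies in exactly two girth-cycles, i.e. it is an $egr(v_\Gamma, k, g, 2)$-graph.

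It then remains to produce infinitely many such $\Gamma$ with systole tending to infinity. Here I would invoke residual finiteness: $G$ is finitely generated and linear (it embeds in the isometry group of the plane, hence in $\mathrm{GL}$ over a field), so by Mal'cev's theorem it is residually finite, and by Selberg's lemma it has a torsion-free subgroup of finite index. Since $G$ acts cocompactly and discretely, only finitely many of its elements move a fixed base point by less than any given bound; residual finiteness then yields a finite-index normal subgroup avoiding all these ``short'' elements, and intersecting it with a torsion-free finite-index subgroup produces a torsion-free $\Gamma$ of finite index whose systole exceeds any prescribed threshold. Letting the threshold grow gives subgroups of strictly increasing index, hence quotients $\mathcal{T}/\Gamma$ of strictly increasing order, which furnishes the desired infinite family.

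The main obstacle is precisely this last control step: guaranteeing that the finite quotients do not acquire \emph{new} short cycles coming from nontrivial loops on the surface, and do not degenerate into multigraphs. Everything hinges on forcing the systole to grow, which is why the residual-finiteness (plus Selberg) argument is the heart of the proof; the geometry of the tessellation only supplies the correct local picture ($k$-regular, girth $g$, two faces per edge). A more hands-on alternative, avoiding the soft group theory, would be to describe $\mathcal{T}/\Gamma$ explicitly as a voltage lift of the single-flag quotient $\mathcal{T}/G$, governed by the triangle-group presentation $\langle x,y,z \mid x^2=y^k=z^g=xyz=1\rangle$, assigning voltages in a large cyclic group and checking directly that no short relation is introduced; this trades the existence argument for an explicit but more computational verification.
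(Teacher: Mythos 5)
Your proof is correct and takes essentially the same route as the paper's source for this statement (the paper only cites Theorem 3.4 of \cite{JKM18} rather than proving it): there, too, the graphs arise as finite quotients of the universal tessellation of type $\{g,k\}$ (Euclidean for $(k,g)=(3,6)$, hyperbolic otherwise) in which no short noncontractible curves exist — your systole condition, phrased there via maps of large planar width — so that girth equals $g$, the only girth-cycles are face boundaries, and every edge lies on exactly two of them. The existence of infinitely many such quotients likewise rests on residual finiteness (plus torsion-freeness à la Selberg) of the tessellation's automorphism group, so your argument matches the cited proof in substance.
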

\begin{theorem}[Th. 4.1 in~\cite{JKM18}]
\label{th:egrGirth3}
    The Cartesian product of an $egr(v_1, k_1, 3, \lambda)$-graph and an $egr(v_2, k_2, 3, \lambda)$-graph is an $egr(v_1v_2, k_1 + k_2, 3, \lambda)$-graph.
\end{theorem}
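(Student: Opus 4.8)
The plan is to verify the three defining properties of an edge-girth-regular graph for the Cartesian product $H = G_1 \,\square\, G_2$: its order and degree, its girth, and the number of triangles through each edge. Recall that $V(H) = V(G_1) \times V(G_2)$, and two vertices $(a_1,a_2)$ and $(b_1,b_2)$ are adjacent precisely when they agree in one coordinate and are adjacent in the other factor. The order $v_1 v_2$ and the regularity $k_1 + k_2$ are immediate from this definition. The girth is exactly $3$: no simple graph has girth below $3$, and since each factor has girth $3$ it contains a triangle, a copy of which survives inside any layer of $H$ obtained by fixing the appropriate coordinate.

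The heart of the argument is a structural claim: \emph{every triangle of $H$ lies entirely within a single layer}, that is, within a copy of $G_1$ obtained by fixing the second coordinate, or within a copy of $G_2$ obtained by fixing the first. To prove it I would classify each edge of $H$ as being of \emph{type~1} (changing the first coordinate while fixing the second) or \emph{type~2} (changing the second while fixing the first), and then trace the three edges around an arbitrary triangle. A short case analysis rules out mixed triangles: if, say, exactly one edge is of type~2, then the remaining two type-1 edges already cover all three vertices and force them to share a common second coordinate, contradicting the fact that the endpoints of the type-2 edge differ in the second coordinate. By the symmetric argument the only surviving possibilities are all-type-1 and all-type-2 triangles, which are exactly the triangles lying inside a $G_1$-layer or a $G_2$-layer, respectively.

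Granting this structural claim, the triangle count transfers directly to the factors. An edge $e$ of type~1 can lie only in type-1 triangles, and these correspond bijectively to the triangles of $G_1$ through the edge of $G_1$ onto which $e$ projects; since $G_1$ is an $egr(v_1,k_1,3,\lambda)$-graph, there are exactly $\lambda$ of them. The symmetric statement holds for type-2 edges through $G_2$, and it is precisely here that the hypothesis of a \emph{common} value $\lambda$ in both factors is used. Hence every edge of $H$ lies in exactly $\lambda$ triangles, so $H$ is an $egr(v_1 v_2,\, k_1+k_2,\, 3,\, \lambda)$-graph.

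I expect the main obstacle to be establishing the no-mixed-triangle claim cleanly; once that structural fact is in place, the bijective triangle count and the verification of order, degree, and girth are routine.
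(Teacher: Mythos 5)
Your proof is correct. Note that the paper itself gives no proof of this statement---it is quoted verbatim as Theorem~4.1 of the cited reference~\cite{JKM18}---and your argument (classifying edges of the product by which coordinate they change, ruling out mixed-type triangles by a parity-of-coordinates contradiction, and then transferring the triangle count bijectively to the layers, where the common value $\lambda$ is used) is precisely the standard argument, essentially the one given in that reference.
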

\begin{theorem}[Th. 4.3 in~\cite{JKM18}]
\label{th:powerOfR}
For every $r \geq 2$ and $g \geq 3$, there exist infinitely many $egr(v, 2^r, g, 1)$-graphs.
\end{theorem}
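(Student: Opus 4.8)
The plan is to realize all the desired graphs as finite quotients of a single infinite Cayley graph whose cycle structure is completely transparent. Set $m = 2^{r-1}$, so $m \geq 2$ since $r \geq 2$, and consider the free product $\Gamma_\infty = \underbrace{\mathbb{Z}_g * \cdots * \mathbb{Z}_g}_{m}$ with standard generators $a_1, \ldots, a_m$, each of order $g$. I would take the symmetric connection set $S = \{a_i^{\pm 1} : 1 \leq i \leq m\}$; since $g \geq 3$ we have $a_i \neq a_i^{-1}$, so $|S| = 2m = 2^r$ and $\mathrm{Cay}(\Gamma_\infty, S)$ is $2^r$-regular. In a free product of cyclic groups, any reduced word involving at least two distinct syllables is nontrivial, so the only cyclically reduced relations of length at most $g$ are the single-generator relations $a_i^g = 1$. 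This at once yields that $\mathrm{Cay}(\Gamma_\infty, S)$ has girth $g$, that its only $g$-cycles are the coset cycles $w, wa_i, \ldots, wa_i^{g-1}$, and that each edge $\{w, wa_i^{\pm 1}\}$ lies in exactly one such coset cycle; hence the infinite Cayley graph satisfies the local conditions defining an $egr$-graph with $\lambda = 1$.

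The substantive step is then to descend to finite graphs without destroying these two local properties. First I would collect the finite set $W$ of all cyclically reduced words over $S$ of length at most $g$ that are nontrivial in $\Gamma_\infty$, together with the finitely many nonidentity elements $a_i^{\,j}$ ($1 \leq j < g$) and $a_i a_j^{\pm 1}$ ($i \neq j$) whose nontriviality guarantees a simple graph of the correct degree. Since free products of finite groups are residually finite (Gruenberg), for each of these finitely many elements there is a finite-index normal subgroup of $\Gamma_\infty$ avoiding it, and intersecting them produces a single finite-index normal subgroup $K \trianglelefteq \Gamma_\infty$ missing all of them. Setting $\Gamma = \Gamma_\infty / K$, the Cayley graph $\mathrm{Cay}(\Gamma, S)$ is a simple $2^r$-regular graph on $|\Gamma|$ vertices in which no relation of length below $g$ and no $g$-relation other than $a_i^g = 1$ survives; therefore it has girth $g$ with exactly one $g$-cycle through each edge, i.e.\ it is an $egr(|\Gamma|, 2^r, g, 1)$-graph.

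For the infinitude I would use that $\Gamma_\infty$ is infinite (because $m \geq 2$) and so admits finite quotients of unbounded order: any admissible $K$ may be replaced by a smaller finite-index normal subgroup that still avoids the finite list $W$, giving graphs of arbitrarily large order and hence infinitely many nonisomorphic $egr(v, 2^r, g, 1)$-graphs.

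I expect the only genuinely delicate point to be this finite-realization step. Everything is immediate in the tree-like infinite Cayley graph, and the entire content lies in guaranteeing a finite quotient that is injective on the ball of radius roughly $g/2$ about the identity, so that neither shorter cycles nor additional $g$-cycles are created; residual finiteness supplies exactly this. If one prefers to avoid that machinery, note that at least for $g = 3$ the result follows instead by iterating Theorem~\ref{th:egrGirth3}: starting from the triangle $K_3 = egr(3,2,3,1)$, repeatedly forming the Cartesian product of an $egr(v, 2^{r-1}, 3, 1)$-graph with itself doubles the degree while preserving both $\lambda = 1$ and girth $3$.
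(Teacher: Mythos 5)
Your proposal is correct, but note first that the paper you are being compared against does not actually prove this statement: Theorem~\ref{th:powerOfR} is imported verbatim from \cite{JKM18} (it is Theorem 4.3 there) and is used as a black box, so the only proof to compare with lives in that external reference, not in this paper. On its own merits your argument is sound. The normal form theorem for free products correctly gives that in $\mathrm{Cay}(\mathbb{Z}_g * \cdots * \mathbb{Z}_g, \{a_i^{\pm 1}\})$ every cyclically non-backtracking closed walk of length at most $g$ spells either a nontrivial element of the free product or one of the relators $a_i^{\pm g}$; hence the girth is $g$ and each edge lies in exactly one $g$-cycle, namely its coset cycle $w\langle a_i\rangle$. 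Gruenberg's theorem (free products of finite groups are residually finite) then supplies a finite-index normal subgroup avoiding the finitely many nontrivial short elements, which is exactly the injectivity-on-balls condition needed so that the quotient keeps girth $g$, keeps the coset cycles as its only $g$-cycles, and stays $2^r$-regular; and intersecting an admissible subgroup $K$ with a normal subgroup excluding some nontrivial element of $K$ gives admissible subgroups of strictly growing index, hence infinitely many orders. Two remarks are worth making. First, your construction never uses that the degree is a power of two: taking $m$ free factors for an arbitrary $m \geq 2$ yields infinitely many $egr(v,2m,g,1)$-graphs for every even degree $2m \geq 4$, which is best possible, since $\lambda = 1$ forces the girth-cycles through a vertex to pair up its incident edges, so odd degree is impossible. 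Your argument therefore proves a statement strictly stronger than the one cited, whose restriction to degrees $2^r$ reflects the particular construction used in \cite{JKM18}. Second, your fallback suggestion for $g=3$ is not an adequate substitute: iterating the Cartesian product of Theorem~\ref{th:egrGirth3} starting from $K_3$ produces exactly one graph of each degree $2^r$ (the product of $2^{r-1}$ triangles), so it gives existence but not the required infinitude; even in the girth-3 case you should rely on the free-product argument.
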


We are now ready to prove the main existence theorem of this section which shows that vertex-girth-regular graphs exist for many $k, g$ and $\lambda$:
\begin{theorem}
There are infinitely many integers $v$ such that a $vgr(v,k,g,\lambda)$-graph exists:
\begin{enumerate}[(i)]
    \item for $\lambda=1$ and all integers $k,g \geq 3$;
    
    \item for $\lambda=2$ and all integers $k \geq 4$, $g \geq 3$;
    
    \item for all integers $\lambda \geq 3$, $k \geq \lambda$, $g \geq 3$, $g \notin \{4,5\}$.
\end{enumerate}
\end{theorem}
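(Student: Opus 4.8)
The plan is to prove each of the three cases by combining the tools assembled just before the statement, rather than by direct construction. The unifying strategy is to realize the desired vertex-girth-regular parameters as a consequence of an edge-girth-regular graph (via \cref{obs:EgrToVgr}, which upgrades an $egr(v,k,g,\mu)$-graph to a $vgr(v,k,g,\tfrac{k\mu}{2})$-graph) and then to boost the degree while preserving both the girth and the value of $\lambda$ using \cref{prop:generalizedTruncation}. The key arithmetic to keep in mind is that \cref{obs:EgrToVgr} forces $\lambda$ to be a multiple of $k/2$; so edge-girth-regularity alone can only reach very special $\lambda$, and the role of generalized truncation is precisely to escape that divisibility restriction, since it increases $k$ by one while keeping $\lambda$ fixed.

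For case (i), $\lambda = 1$, I would start from an $egr(v,2,g,1)$-graph for the relevant girth and then apply \cref{prop:generalizedTruncation} repeatedly to climb from degree $2$ up to any desired $k \geq 3$; alternatively \cref{th:powerOfR} supplies $egr(v,2^r,g,1)$-graphs directly, which by \cref{obs:EgrToVgr} are $vgr(v,2^r,g,2^{r-1})$-graphs — not what we want for $\lambda=1$, so the cleaner route is the degree-$2$ seed followed by truncation. A single cycle $C_v$ of length $v \geq g$ with $v$ a multiple of $g$ is $2$-regular of girth $g$ with exactly one girth-cycle through each vertex, i.e.\ a $vgr(v,2,g,1)$-graph; feeding this into \cref{prop:generalizedTruncation} once per unit increment of the degree yields infinitely many $vgr(v',k,g,1)$-graphs for every $k \geq 3$ and every $g \geq 3$. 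For case (ii), $\lambda = 2$ with $k \geq 4$, the seed should be a graph with $\lambda = 2$ at degree $3$; here I would use \cref{th:egrGirthAtLeast6} to obtain $egr(v,3,g,2)$-graphs for $g \geq 6$, which are $vgr(v,3,g,3)$-graphs — again the wrong $\lambda$. This signals that for $\lambda = 2$ one wants a degree-$2$-flavoured seed that already has two girth-cycles per vertex, or an $egr$ graph with $\mu=1$ at degree $4$ giving $\lambda = 2$; indeed an $egr(v,4,g,1)$-graph (available from \cref{th:powerOfR} with $r=2$) is exactly a $vgr(v,4,g,2)$-graph, and truncation then lifts this to all $k \geq 4$.

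For the harder case (iii), $\lambda \geq 3$, $k \geq \lambda$, $g \geq 3$, $g \notin \{4,5\}$, the natural plan is to split on the parity/size of $g$. For $g = 3$ I would build the seed from complete-graph-based $egr$ pieces and Cartesian products via \cref{th:egrGirth3}, exploiting that $K_{k+1}$ is $egr$ of girth $3$ and that products add degrees while fixing $\mu$; for $g \geq 6$ I would again start from the $egr(v,3,g,2)$-graphs of \cref{th:egrGirthAtLeast6}, producing $vgr(v,3,g,3)$-graphs as the $\lambda = 3$ base case, and then apply \cref{prop:generalizedTruncation} to raise the degree so that every $k \geq \lambda$ is covered while $\lambda$ stays put. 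The main obstacle is manufacturing seeds for each target value of $\lambda$ at the minimal admissible degree $k = \lambda$, because \cref{obs:EgrToVgr} only delivers $\lambda$-values divisible by $k/2$; reaching a generic $\lambda$ therefore requires either constructing non-edge-girth-regular seeds directly or choosing an $egr$ seed at a higher degree with small $\mu$ and then checking that truncation and the product construction interact correctly with the girth exclusion $g \notin \{4,5\}$. I expect the exclusion of $g \in \{4,5\}$ to be exactly the range where no suitable small-$\mu$ edge-girth-regular seed of the right degree is guaranteed by the cited theorems (\cref{th:egrGirthAtLeast6} needs $g \geq 6$, and \cref{th:egrGirth3} is confined to $g = 3$), so the bulk of the work — and the likely source of any technical subtlety — is verifying that the chosen seeds genuinely have exactly $\lambda$ girth-cycles per vertex and that the truncation step neither creates shorter cycles nor alters the per-vertex girth-cycle count.
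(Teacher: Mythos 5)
Your overall strategy---edge-girth-regular seeds converted via \cref{obs:EgrToVgr} and then lifted in degree by \cref{prop:generalizedTruncation}---is exactly the paper's, and your case (ii) coincides with the paper's proof. But two problems remain. The first is a factual error in case (i): a cycle $C_v$ with $v>g$ has girth $v$, not $g$, so it is a $vgr(v,2,v,1)$-graph; the only valid seed is $C_g$ itself, the unique $vgr(g,2,g,1)$-graph. This slip is harmless, because the infinitude of orders is supplied by \cref{prop:generalizedTruncation} itself rather than by varying the cycle length, but as written your claim is false.

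The more serious issue is that case (iii) has a genuine gap: you explicitly flag ``manufacturing seeds for each target value of $\lambda$ at the minimal admissible degree $k=\lambda$'' as the main obstacle and never resolve it, and your concrete construction for $g\ge 6$ only produces $\lambda=3$ (from $egr(v,3,g,2)$-graphs), since truncation keeps $\lambda$ fixed. The resolution---and precisely the reason the theorem assumes $k\ge\lambda$---is to choose the seed's \emph{degree} equal to $\lambda$: for $g\ge 6$, \cref{th:egrGirthAtLeast6} supplies $egr(v,\lambda,g,2)$-graphs for every $\lambda\ge 3$, and by \cref{obs:EgrToVgr} these are $vgr(v,\lambda,g,\lambda)$-graphs (since $\tfrac{\lambda\cdot 2}{2}=\lambda$); repeated truncation then raises the degree from $\lambda$ to any $k\ge\lambda$ while fixing $\lambda$. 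The divisibility restriction you worry about dissolves once the seed degree is set to $\lambda$ with $\mu=2$. Your sketch for $g=3$ also does not work as stated: $K_{k+1}$ is an $egr(k+1,k,3,k-1)$-graph, so for $k>3$ its edge-parameter differs from $2$, and \cref{th:egrGirth3} requires both factors of the Cartesian product to have the \emph{same} $\mu$; products of copies of $K_4$ (the only complete graph with $\mu=2$) reach only degrees divisible by $3$. The paper instead starts from known $egr(\cdot,k,3,2)$-graphs for $k\in\{3,4,5\}$ (namely $K_4$, the octahedron and the icosahedron) and takes Cartesian products with $K_4$ to obtain $\mu=2$ graphs of every degree at least $3$, i.e., $vgr(v,\lambda,3,\lambda)$-graphs for every $\lambda\ge 3$, before truncating up to degree $k$.
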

\begin{proof}
(i) By recursively applying Proposition~\ref{prop:generalizedTruncation} to cycles (i.e., $vgr(g,2,g,1)$-graphs), we obtain the existence of infinitely many $vgr(v,k,g,1)$-graph for all integers $k, g \geq 3$.\\
(ii) By applying Theorem~\ref{th:powerOfR} for $r=2$, we obtain the existence of infinitely many $egr(v,4,g,1)$-graphs for all $g \geq 3$ and thus infinitely many $vgr(v,4,g,2)$-graphs by applying Observation~\ref{obs:EgrToVgr}. Finally, recursively applying Proposition~\ref{prop:generalizedTruncation} to these graphs proves (ii).\\
(iii) We first deal with the case $g=3$. As discussed in~\cite{GJ24}, there exist an $egr(v_1,3,3,2)$-graph $G_1$ on 4 vertices, an $egr(v_2,4,3,2)$-graph $G_2$ on 6 vertices and an $egr(v_3,5,3,2)$-graph $G_3$ on 12 vertices. For all integers $k \geq 6$, one can construct the graph $G_{k-2}$ by taking the Cartesian product of $G_1$ and $G_{k-5}$. Because of Theorem~\ref{th:egrGirth3}, these graphs are all $k$-regular graphs with girth 3 such that each edge is contained in exactly 2 triangles. Hence, this yields the existence of a $vgr(v,k,3,k)$-graph for each $k \geq 3$ because of Observation~\ref{obs:EgrToVgr}. By recursively applying Proposition~\ref{prop:generalizedTruncation} to these graphs, we obtain infinitely many $vgr(v,k,3,\lambda)$-graphs and we are done with the girth 3 case.

We can combine Theorem~\ref{th:egrGirthAtLeast6} and Observation~\ref{obs:EgrToVgr} to obtain the existence of a $vgr(v,k,g,k)$-graph for each $g \geq 6$ and $k \geq 3$. Finally, by recursively applying Proposition~\ref{prop:generalizedTruncation} to these graphs, we obtain for each fixed $(k,g,\lambda)$ (where $\lambda \geq 3$, $k \geq \lambda$ and $g \geq 6$) the existence of infinitely many $vgr(v,k,g,\lambda)$-graphs.
\end{proof}

\section{Lower bounds on $n(k,g,\lambda)$} \label{sec:LBounds}

In this section, we present lower bounds on the order of extremal vertex-girth-regular graphs (i.e., vertex-girth-regular graphs of the smallest order). We get the lower bounds by generalizing already existing bounds for edge-girth-regular and girth-regular graphs \cite{APKP23,DFJR21,P23}. We will use the fact that every vertex of a $vgr(n,k,g,\lambda)$-graph has an edge that is contained in at least/at most $\frac{2\lambda}{k}$ distinct girth-cycles. In order to prove this fact, we choose an arbitrary vertex $u$ and denote its signature by $\mathbf{a}=\{a_1,\ldots, a_k\}$, where $a_1 \geq a_2 \geq ... \geq a_k$. The sum of these numbers is two times the number of girth-cycles through $u$, which is equal to $\lambda$ by definition. Hence, the average of the signature is $\frac{2\lambda}{k}$. Therefore
\begin{equation} a_k\le \left\lfloor\frac{2\lambda}{k}\right\rfloor\le\left\lceil\frac{2\lambda}{k}\right\rceil\le a_1.\end{equation}
First, we present a generalization of the combinatorial lower bounds.
\begin{theorem}
    Let $G$ be a $vgr(n,k,g,\lambda)$-graph, where $g=2h$ is an even number. Then
    $$n\ge 2\frac{(k-1)^h-1}{k-2}+\left\lceil \frac{2(k-1)^h-2\left\lfloor\frac{2\lambda}{k}\right\rfloor}{k}\right\rceil.$$
    Moreover, if $G$ is bipartite then
    $$n\ge 2\frac{(k-1)^h-1}{k-2}+2\left\lceil \frac{(k-1)^h-\left\lfloor\frac{2\lambda}{k}\right\rfloor}{k}\right\rceil.$$
\end{theorem}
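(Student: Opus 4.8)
The plan is to generalize the even-girth Moore-bound argument of Proposition~\ref{prop:elementalProperties}(ii) by counting how many edges are forced to leave an edge-rooted Moore tree. Write $g=2h$, so that $M(k,2h)=2\frac{(k-1)^h-1}{k-2}$ is the first term of the bound. First I would invoke the signature estimate $a_k\le\lfloor 2\lambda/k\rfloor$ noted above: every vertex $u_1$ has an incident edge $u_1u_2$ lying on at most $\lfloor 2\lambda/k\rfloor$ girth-cycles. Fixing such an edge, build the tree $\mathcal{T}^{u_1,u_2}_{k,h-1}$ of Proposition~\ref{prop:elementalProperties}(ii); it has exactly $M(k,2h)$ vertices and leaf sets $D_{u_1},D_{u_2}$, each of size $(k-1)^{h-1}$. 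Exactly as in that proof, girth-cycles through $u_1u_2$ correspond bijectively to edges joining $D_{u_1}$ to $D_{u_2}$, so the number $e$ of such cross-edges satisfies $e\le\lfloor 2\lambda/k\rfloor$.

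The key structural step is to show that, because $G$ has girth $2h$, every edge leaving a leaf of $D_{u_1}$ either lands in $D_{u_2}$ or leaves the tree entirely, ending in $S:=V(G)\setminus V(\mathcal{T}^{u_1,u_2}_{k,h-1})$: an edge inside $D_{u_1}$ or from $D_{u_1}$ to an interior vertex would close a cycle of length $<2h$. Since each leaf has $k-1$ non-tree edges, $D_{u_1}$ emits $(k-1)^h$ edges, of which $e$ reach $D_{u_2}$ and at least $(k-1)^h-e$ reach $S$; the same holds for $D_{u_2}$. Hence $S$ receives at least $2(k-1)^h-2e\ge 2(k-1)^h-2\lfloor 2\lambda/k\rfloor$ edges, and since each vertex of $S$ has degree $k$ it absorbs at most $k$ of them. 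This gives $|S|\ge\lceil(2(k-1)^h-2\lfloor 2\lambda/k\rfloor)/k\rceil$, and adding $|V(\mathcal{T}^{u_1,u_2}_{k,h-1})|=M(k,2h)$ yields the first bound.

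For the bipartite refinement I would split $S$ by colour class. Let $S_1,S_2\subseteq S$ be the vertices of $S$ adjacent to $D_{u_1}$ and to $D_{u_2}$, respectively. In a bipartition with $u_1,u_2$ in opposite classes, a parity check shows that $D_{u_1}$ and $D_{u_2}$ lie in opposite colour classes, so their neighbourhoods do as well; hence $S_1\cap S_2=\emptyset$. Each of $S_1,S_2$ receives at least $(k-1)^h-e\ge(k-1)^h-\lfloor 2\lambda/k\rfloor$ edges and absorbs at most $k$ per vertex, so $|S_i|\ge\lceil((k-1)^h-\lfloor 2\lambda/k\rfloor)/k\rceil$ for each $i$ \emph{separately}. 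Summing the two integer ceilings (which is at least as strong as taking a single ceiling of the doubled fraction) and adding $M(k,2h)$ gives the bipartite bound.

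The main obstacle is the girth-based structural claim of the second paragraph, together with the disjointness in the bipartite case. One must verify carefully that no edge from a $D_{u_i}$-leaf can reach another leaf of the same side or an interior tree vertex without producing a cycle shorter than $g$, and, for the bipartite statement, that the receiving sets $S_1,S_2$ genuinely occupy distinct colour classes; the parity bookkeeping comparing $D_{u_1}$ and $D_{u_2}$ is precisely where a careless argument would fail.
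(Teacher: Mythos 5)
Your proposal is correct and follows essentially the same route as the paper: pick an edge lying on at most $\left\lfloor\frac{2\lambda}{k}\right\rfloor$ girth-cycles via the signature average, build the edge-rooted Moore tree, count the $2(k-1)^h-2\Lambda$ edges forced to leave it (at most $k$ per outside vertex), and in the bipartite case split these leaving edges between the two colour classes. Your write-up is in fact more self-contained, since the paper delegates the girth-based structural claim and the counting to the proofs of Theorems 2.3 and 5.1 in \cite{DFJR21}, whereas you verify directly that leaf edges can only go to the opposite leaf set or outside the tree and that the two receiving sets lie in distinct colour classes.
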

\begin{proof}
There is an edge that is contained in $\Lambda \le\left\lfloor\frac{2\lambda}{k}\right\rfloor$ distinct girth-cycles. For that particular edge, we follow the proof of Theorem 2.3 and Theorem 5.1 in \cite{DFJR21}, and we immediately get the lower bound: we consider the set of vertices that are at a distance $h$ from the chosen edge. The number of edges with both endpoints in the set of these vertices equals the number of girth-cycles through the chosen edge, which is precisely $\Lambda$. Hence, the number of edges that leave the Moore tree is $2(k-1)^h-2\Lambda$. The graph is $k$-regular so that we can give a lower bound on the number of vertices outside the Moore tree:
\begin{align*}
    n&\ge 2\frac{(k-1)^h-2}{k-2}+\left\lceil\frac{2(k-1)^h-2\Lambda}{k}\right\rceil\\
    &\ge 2\frac{(k-1)^h-2}{k-2}+\left\lceil \frac{2(k-1)^h-2\left\lfloor\frac{2\lambda}{k}\right\rfloor}{k}\right\rceil.
\end{align*}
In the bipartite case, there is a slight improvement on the lower bound. There are $2(k-1)^h-2\Lambda$ edges that leave the Moore tree. Half of them have an endpoint in one part of the graph, and the rest of them have an endpoint in the other part. Hence
\begin{align*}
    n&\ge 2\frac{(k-1)^h-2}{k-2}+2\left\lceil\frac{(k-1)^h-\Lambda}{k}\right\rceil\\
    &\ge 2\frac{(k-1)^h-2}{k-2}+2\left\lceil \frac{(k-1)^h-\left\lfloor\frac{2\lambda}{k}\right\rfloor}{k}\right\rceil.
\end{align*}
\end{proof}
\begin{theorem}
    Let $G$ be a $vgr(n,k,g,\lambda)$-graph, where $g=2h+1$ is an odd number. Then
    $$n\ge n(k,g,\lambda)\ge \frac{k(k-1)^h-2}{k-2}+\left\lceil\frac{k(k-1)^h-2\lambda}{k}\right\rceil.$$
\end{theorem}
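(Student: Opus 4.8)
The plan is to mirror the argument of the preceding (even-girth) theorem, but to center the Moore tree at a \emph{vertex} rather than at an edge. First I would fix an arbitrary vertex $u$ and build the Moore tree $\mathcal{T}^{u}_{k,h}$ consisting of all vertices at distance at most $h$ from $u$, with all edges of $G$ among these vertices except those joining two vertices at distance exactly $h$; this is precisely the tree used in the proof of Proposition~\ref{prop:elementalProperties}(i). Since $G$ has girth $g=2h+1$, this subgraph is genuinely a tree, it has $M(k,2h+1)=\frac{k(k-1)^{h}-2}{k-2}$ vertices, and it has exactly $k(k-1)^{h-1}$ leaves, each lying at distance exactly $h$ from $u$.

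The key structural input, already recorded in Proposition~\ref{prop:elementalProperties}(i), is the one-to-one correspondence between girth-cycles through $u$ and edges joining two leaves of $\mathcal{T}^{u}_{k,h}$. Because $u$ lies on exactly $\lambda$ girth-cycles, there are exactly $\lambda$ such leaf-to-leaf edges. I would then count edge-ends emanating from the leaves: each of the $k(k-1)^{h-1}$ leaves has degree $k$, one edge running to its parent in the tree, leaving $k-1$ free edge-ends per leaf and hence $k(k-1)^{h}$ in total. Of these, exactly $2\lambda$ are consumed by the $\lambda$ leaf-to-leaf edges, so precisely $k(k-1)^{h}-2\lambda$ edges leave the tree.

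It remains to bound the number of vertices outside $\mathcal{T}^{u}_{k,h}$. Since $G$ is $k$-regular, each vertex outside the tree can receive at most $k$ of the departing edges, so there are at least $\left\lceil \frac{k(k-1)^{h}-2\lambda}{k}\right\rceil$ of them. Adding this to the order of the Moore tree gives
$$n\ge \frac{k(k-1)^{h}-2}{k-2}+\left\lceil\frac{k(k-1)^{h}-2\lambda}{k}\right\rceil,$$
and the inequality $n\ge n(k,g,\lambda)$ is immediate from the definition of $n(k,g,\lambda)$ as the minimum order of such a graph.

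The only step needing genuine care is verifying that every free edge-end of a leaf that is not used by a leaf-to-leaf edge truly exits the tree, that is, that a leaf $w$ at distance $h$ has no neighbor inside $V(\mathcal{T}^{u}_{k,h})$ other than its parent and (possibly) other leaves. This is exactly where the girth hypothesis enters: a neighbor of $w$ at distance at most $h-1$ distinct from its parent would close a cycle of length at most $h+(h-1)+1=2h<g$, contradicting $\mathrm{girth}(G)=2h+1$, while a neighbor at distance $h$ is another leaf and contributes one of the counted girth-cycles. As a consistency check, for a Moore graph Proposition~\ref{prop:elementalProperties}(i) gives $\lambda=\frac{k(k-1)^{h}}{2}$, so the ceiling term vanishes and the bound reduces to $n\ge M(k,g)$, which is sharp; this is the main obstacle only in the sense that the counting must be set up so that it degenerates correctly in the extremal case.
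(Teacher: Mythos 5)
Your proof is correct and follows essentially the same route as the paper: the paper's (very terse) argument, deferring to Theorem 2.3 of \cite{DFJR21}, likewise roots a Moore tree at an arbitrary vertex, identifies the $\lambda$ girth-cycles through it with the $\lambda$ edges joining vertices at distance $h$, and bounds the number of outside vertices by dividing the $k(k-1)^h-2\lambda$ departing edges by $k$. Your write-up simply supplies the details (leaf count, the girth argument ruling out other edges inside the ball) that the paper leaves implicit.
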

\begin{proof}
We can apply the same argument as in Theorem 2.3 in \cite{DFJR21}: we choose an arbitrary vertex $v$ and consider the set of vertices that are at a distance $h$ from $v$. Then, the number of girth-cycles through $v$ equals the number of edges with both endpoints in this set. It is exactly $\lambda$. Hence, we can give a lower bound on the number of vertices outside the Moore tree. So, the order of the graph is at least
$$n\ge n(k,g,\lambda)\ge \frac{k(k-1)^h-2}{k-2}+\left\lceil\frac{k(k-1)^h-2\lambda}{k}\right\rceil.$$
\end{proof}
The next lower bound for even girth is a combinatorial one that gives a lower bound on the number of vertices outside the Moore tree. It is a straightforward generalization of Theorem $4.4$ in \cite{APKP23}. The proof is based on counting the number of girth-cycles through an arbitrary vertex, avoiding one of its edges.
\begin{theorem}
Let $G$ be a $vgr(n,k,g,\lambda)$-graph, where $g=2h$ is an even number. Suppose that there exists an edge that is contained in $\Lambda$ distinct girth-cycles. Then
$$
    \begin{aligned}
        n\ge2\frac{(k-1)^h-1}{k-2} +\left\lceil\frac{\left((k-1)^h-\Lambda\right)^2}{2\lambda-3\Lambda+(k-1)^h-2\max\left(0,\left\lceil\frac{\Lambda^2}{2(k-1)^{(h-1)}}-\frac{\Lambda}{2}\right\rceil\right)}\right\rceil.
    \end{aligned}
$$
\end{theorem}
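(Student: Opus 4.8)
The plan is to reuse the Moore-tree framework from the proof of Proposition~\ref{prop:elementalProperties}, but now to extract information from girth-cycles that \emph{avoid} a distinguished edge. Writing $g=2h$, I would take the edge $u_1u_2$ that lies on exactly $\Lambda$ girth-cycles and build the rooted tree $\mathcal{T}^{u_1,u_2}_{k,h-1}$ with leaf-sets $D_{u_1},D_{u_2}$, where $|D_{u_1}|=|D_{u_2}|=(k-1)^{h-1}$ and $|V(\mathcal{T}^{u_1,u_2}_{k,h-1})|=M(k,g)=2\frac{(k-1)^h-1}{k-2}$; this accounts for the first summand of the bound. Partitioning the girth-cycles through $u_1$ into $\mathcal{A}_{u_1},\mathcal{B}_{u_1},\mathcal{C}_{u_1}$ exactly as before, the girth-cycles through $u_1$ containing the edge $u_1u_2$ are precisely those in $\mathcal{A}_{u_1}$, so $|\mathcal{A}_{u_1}|=\Lambda$ and therefore $|\mathcal{B}_{u_1}|+|\mathcal{C}_{u_1}|=\lambda-\Lambda$.

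Next I would count the edges leaving $D_{u_1}$. Each of its $(k-1)^{h-1}$ leaves has $k-1$ non-tree edges, and a short girth-based case analysis shows none of these can land inside $\mathcal{T}^{u_1,u_2}_{k,h-1}$ or inside $D_{u_1}$ itself, while exactly $\Lambda$ of them reach $D_{u_2}$; hence precisely $S:=(k-1)^h-\Lambda$ of them reach vertices outside the Moore tree. For an outside vertex $m$, let $d_1(m)$ be its number of neighbours in $D_{u_1}$. Since $m$ is adjacent to at most one leaf in each of the $k-1$ branches below $u_1$, every pair of such neighbours lies in distinct branches and spans a girth-cycle in $\mathcal{C}_{u_1}$, giving $|\mathcal{C}_{u_1}|=\sum_m\binom{d_1(m)}{2}$ and $\sum_m d_1(m)=S$.

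The core estimate is a Cauchy--Schwarz (convexity) bound. If $N_{\mathrm{out}}$ denotes the number of vertices outside the Moore tree, then $\sum_m d_1(m)^2\ge S^2/N_{\mathrm{out}}$, so $|\mathcal{C}_{u_1}|=\tfrac12\big(\sum_m d_1(m)^2-S\big)\ge\tfrac12\big(S^2/N_{\mathrm{out}}-S\big)$, which rearranges to $N_{\mathrm{out}}\ge S^2/(2|\mathcal{C}_{u_1}|+S)$. To turn this into a bound free of the unknown $|\mathcal{C}_{u_1}|$ I would maximise the denominator, i.e.\ bound $|\mathcal{C}_{u_1}|=(\lambda-\Lambda)-|\mathcal{B}_{u_1}|$ from above by bounding $|\mathcal{B}_{u_1}|$ from below. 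Writing $|\mathcal{B}_{u_1}|=\sum_{m'\in D_{u_2}}\binom{e(m')}{2}$ with $\sum_{m'}e(m')=\Lambda$ over $(k-1)^{h-1}$ terms, convexity (the most even distribution of the $\Lambda$ edges among the vertices of $D_{u_2}$) gives $|\mathcal{B}_{u_1}|\ge\max\!\big(0,\lceil\tfrac{\Lambda^2}{2(k-1)^{h-1}}-\tfrac{\Lambda}{2}\rceil\big)=:B_{\min}$. Substituting $2|\mathcal{C}_{u_1}|+S\le 2\lambda-3\Lambda+(k-1)^h-2B_{\min}$ and taking ceilings (as $N_{\mathrm{out}}$ is an integer) yields the claimed bound once the Moore-tree count $M(k,g)$ is added back.

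I expect the main obstacle to be careful bookkeeping rather than any deep idea. One must verify that no non-tree edge from $D_{u_1}$ can return to $\mathcal{T}^{u_1,u_2}_{k,h-1}$ without creating a cycle shorter than $g$ (so that $S=(k-1)^h-\Lambda$ is exact), that every pair of $D_{u_1}$-neighbours of an outside vertex genuinely sits in distinct branches (so that the $\binom{d_1(m)}{2}$ count involves neither overcounting nor short cycles), and that the two optimisation steps point in opposite directions---the Cauchy--Schwarz step wants $|\mathcal{C}_{u_1}|$ large, while the convexity bound on $|\mathcal{B}_{u_1}|$ forces it small. Keeping the integrality and ceiling corrections consistent in the $B_{\min}$ term is the last delicate point.
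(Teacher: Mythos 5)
Your proposal is correct and follows essentially the same route as the paper's proof: fix the edge lying on $\Lambda$ girth-cycles, split the girth-cycles through $u_1$ avoiding that edge into those closing through a vertex of $D_{u_2}$ and those closing through a vertex outside the Moore tree, lower-bound the former by convexity to get the $\max\left(0,\left\lceil\frac{\Lambda^2}{2(k-1)^{h-1}}-\frac{\Lambda}{2}\right\rceil\right)$ term, and apply Cauchy--Schwarz (QM--AM) to the degree sequence toward $D_{u_1}$ to lower-bound the number of extra vertices. The only cosmetic difference is that you run Cauchy--Schwarz over all vertices outside the Moore tree, whereas the paper runs it over the set $M$ of antipodal vertices at distance $h$ from $u_1$; both yield the identical final bound.
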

\begin{proof}
We follow the same argument as in the proof of Theorem $4.4$ in \cite{APKP23}.
Choose an arbitrary edge $uv$ that is contained in exactly $\Lambda$ distinct $g$-cycles. We define the set $D_u$ of vertices as follows: $w\in D_u$ if and only if the length of the shortest $uw$-path is $h-1$, and the length of the shortest $vw$-path is $h$. Similarly,  $w\in D_v$ if and only if the length of the shortest $vw$-path is $h-1$, and the length of the shortest $uw$-path is $h$. The number of edges between $D_u$ and $D_v$ is exactly $\Lambda$. Hence, the number of girth-cycles through $u$ that do not contain the edge $uv$ is $\lambda-\Lambda$. There are two types of these cycles: the ones that have a vertex in $D_v$ and the ones that have a vertex at a distance $h$ from $u$ and outside of $D_v$. Denote the latter set of these vertices with $M$ (see Fig.~\ref{fig:evenGirthMooreTree}).\\
\begin{figure}
\begin{center}
    \includegraphics[scale=0.8]{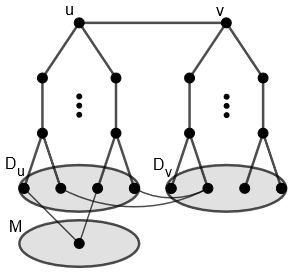}
\end{center}
\caption{The sets $D_u$, $D_v$ and $M$.}
\label{fig:evenGirthMooreTree}
\end{figure}
We give a lower bound for the number of the first type of these cycles to give an upper bound for the number of the second type of these cycles. There are $(k-1)^{h-1}$ vertices in $D_v$=$\{v_1,\ldots,v_{(k-1)^{h-1}}\}$. Each vertex $v_i$ has $y_i$ neighbours in $D_u$. Clearly, $\sum y_i=\Lambda$. In each vertex $v_i$, we have $\binom{y_i}{2}$ possible choices to form a girth-cycle through $u$ that does not contain the edge $uv$. Hence, the number of girth-cycles of the first type is
$$\sum_{i=1}^{(k-1)^{h-1}}\binom{y_i}{2}=\frac{1}{2}\sum_{i=1}^{(k-1)^{h-1}}y_i^2-\frac{1}{2}\sum_{i=1}^{(k-1)^{h-1}}y_i=\frac{1}{2}\sum_{i=1}^{(k-1)^{h-1}}y_i^2-\frac{\Lambda}{2}.$$
The inequality between the arithmetic and quadratic means for the degree set $\{y_1,\ldots,y_{(k-1)^{h-1}}\}$ gives a lower bound on the number of these cycles:
$$\frac{1}{2}\sum_{i=1}^{(k-1)^{h-1}}y_i^2-\frac{\Lambda}{2}\ge\frac{\Lambda^2}{2(k-1)^{(h-1)}}-\frac{\Lambda}{2}.$$
For small $\Lambda$ values, this lower bound is negative. Hence, we have the following lower bound:
$$\sum_{i=1}^{(k-1)^{h-1}}\binom{y_i}{2}\ge \max\left(0,\left\lceil\frac{\Lambda^2}{2(k-1)^{(h-1)}}-\frac{\Lambda}{2}\right\rceil\right).$$
Suppose that there are $m$ vertices in the set $M$. Their degree set is $\{x_1,\ldots,x_m\}$. We obtain a girth-cycle if we choose such a vertex $u_i$, its two neighbors, $w_1$ and $w_2$ in $D_u$ and their unique $(h-1)$-paths to the vertex $u$. Therefore the number of girth-cycles of the second type is exactly $\sum_{i=1}^m\binom{x_i}{2}.$ Now, we have the following upper bound on the number of these cycles:
$$\sum_{i=1}^m\binom{x_i}{2}=\lambda-\Lambda-\sum_{i=1}^{(k-1)^{h-1}}\binom{y_i}{2}\le\lambda-\Lambda- \max\left(0,\left\lceil\frac{\Lambda^2}{2(k-1)^{(h-1)}}-\frac{\Lambda}{2}\right\rceil\right).$$
We use this inequality to give a lower bound for $m$, but first, we need to rearrange the terms. We also use the fact that $\sum_{i=1}^mx_i=(k-1)^h-\Lambda$. 
Now, we have that 
\begin{align*}
\sum_{i=1}^mx_i^2=2\sum_{i=1}^m\binom{x_i}{2}+\sum_{i=1}^m x_i\le (k-1)^h+2\lambda-3\Lambda- 2\max\left(0,\left\lceil\frac{\Lambda^2}{2(k-1)^{(h-1)}}-\frac{\Lambda}{2}\right\rceil\right).
\end{align*}
By using the inequality between the arithmetic and quadratic mean, we get a lower bound for $m$:
$$m\ge\frac{\left(\sum_{i=1}^m x_i\right)^2}{\sum_{i=1}^m x_i^2}\ge \frac{\left((k-1)^h-\Lambda\right)^2}{2\lambda-3\Lambda+(k-1)^h-2\max\left(0,\left\lceil\frac{\Lambda^2}{2(k-1)^{(h-1)}}-\frac{\Lambda}{2}\right\rceil\right)}.$$
Since $G$ is a $k$-regular graph of girth $g$, it has at least $M(k,g)$ vertices, but with the lower bound of $m$, we also give a lower bound for the additional vertices. We add it to the Moore bound and obtain the generalization of the lower bound by Araujo-Pardo, Kiss, and Porupsánszki \cite{APKP23}.
\end{proof}
By definition, we do not have any assumption on the signature of a vertex-girth-regular graph. We only know the average of the signature that we can use to obtain a general lower bound on the order of vertex-girth-regular graphs.
\begin{theorem}
Let $G$ be a $vgr(n,k,g,\lambda)$-graph, where $g=2h$ is an even number. Then 
$$
    \begin{aligned}
        n\ge&2\frac{(k-1)^h-1}{k-2} \\
        &+\max_{\Lambda\in\left\{\left\lfloor\frac{2\lambda}{k}\right\rfloor,\left\lceil\frac{2\lambda}{k}\right\rceil\right\}}\left\lceil\frac{\left((k-1)^h-\Lambda\right)^2}{2\lambda-3\Lambda+(k-1)^h-2\max\left(0,\left\lceil\frac{\Lambda^2}{2(k-1)^{(h-1)}}-\frac{\Lambda}{2}\right\rceil\right)}\right\rceil.
    \end{aligned}
$$
\end{theorem}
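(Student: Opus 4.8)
The plan is to derive this statement from the preceding theorem, whose bound requires fixing a single edge together with its \emph{exact} number $\Lambda$ of girth-cycles. Write $f(\Lambda)$ for the ceiling expression occurring there and put $N=(k-1)^h$. First I would use the signature inequality $a_k\le\lfloor 2\lambda/k\rfloor\le\lceil 2\lambda/k\rceil\le a_1$ established at the start of this section: fixing any vertex $u$ with signature $a_1\ge\cdots\ge a_k$, it has an incident edge lying in $\Lambda_-:=a_k\le\lfloor 2\lambda/k\rfloor$ girth-cycles and an incident edge lying in $\Lambda_+:=a_1\ge\lceil 2\lambda/k\rceil$ girth-cycles. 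Applying the preceding theorem to each of these two edges gives $n\ge 2\frac{(k-1)^h-1}{k-2}+f(\Lambda_-)$ and the analogous bound with $\Lambda_+$, so it suffices to prove $\max\{f(\Lambda_-),f(\Lambda_+)\}\ge\max\{f(\lfloor 2\lambda/k\rfloor),f(\lceil 2\lambda/k\rceil)\}$.

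The key step, which I expect to be the main obstacle, is to show that $f$ is \emph{unimodal} in $\Lambda$: there is a threshold $\Lambda^\ast$ such that $f$ is non-increasing on $[0,\Lambda^\ast]$ and non-decreasing on $[\Lambda^\ast,\infty)$. To prove this I would discard the two ceilings (which only weaken strict monotonicity) and split according to whether the inner maximum vanishes, i.e.\ whether $\Lambda\le(k-1)^{h-1}$. On the first range the denominator equals $N+2\lambda-3\Lambda$ and differentiation gives $f'(\Lambda)$ proportional to $(N-\Lambda)(N-4\lambda+3\Lambda)$; on the second range the denominator equals $N+2\lambda-2\Lambda-\Lambda^2/(k-1)^{h-1}$ and one obtains $f'(\Lambda)$ proportional to $(N-\Lambda)(k\Lambda-2\lambda)$. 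Since $N-\Lambda>0$ on the admissible range $\Lambda\le N$, while the bracketed factors $N-4\lambda+3\Lambda$ and $k\Lambda-2\lambda$ are increasing, each changing sign at most once (from negative to positive), a short verification that the two pieces match up at the junction $\Lambda=(k-1)^{h-1}$ produces a single decreasing-then-increasing profile. I would also confirm that the denominator stays positive on the range we actually use, so that $f$ is well-defined there and the comparisons are legitimate.

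Granting unimodality, I would conclude by a case split on the position of $\Lambda^\ast$ relative to the consecutive integers $\lfloor 2\lambda/k\rfloor\le\lceil 2\lambda/k\rceil$. If $\Lambda^\ast\le\lfloor 2\lambda/k\rfloor$, then $f$ is non-decreasing on $[\lfloor 2\lambda/k\rfloor,\infty)$, so the right-hand maximum is $f(\lceil 2\lambda/k\rceil)$ and $f(\Lambda_+)\ge f(\lceil 2\lambda/k\rceil)$ because $\Lambda_+\ge\lceil 2\lambda/k\rceil\ge\Lambda^\ast$. The case $\Lambda^\ast\ge\lceil 2\lambda/k\rceil$ is symmetric, using $\Lambda_-\le\lfloor 2\lambda/k\rfloor\le\Lambda^\ast$ together with $f(\Lambda_-)\ge f(\lfloor 2\lambda/k\rfloor)$. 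Finally, if $\lfloor 2\lambda/k\rfloor\le\Lambda^\ast\le\lceil 2\lambda/k\rceil$, then monotonicity on each side of $\Lambda^\ast$ yields both $f(\Lambda_-)\ge f(\lfloor 2\lambda/k\rfloor)$ and $f(\Lambda_+)\ge f(\lceil 2\lambda/k\rceil)$, hence $\max\{f(\Lambda_-),f(\Lambda_+)\}\ge\max\{f(\lfloor 2\lambda/k\rfloor),f(\lceil 2\lambda/k\rceil)\}$. In every case the desired inequality holds, which completes the argument.
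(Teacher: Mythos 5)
Your proposal follows essentially the same route as the paper: its proof likewise treats the second summand as a function $f$ of $\Lambda$ (with the ceilings dropped), studies its derivative around $x=\frac{2\lambda}{k}$ — concluding a local minimum there when $\lambda\ge\frac{k(k-1)^{h-1}}{2}$ and convexity otherwise — and then substitutes the two integers nearest the signature average, exactly as licensed by the inequality $a_k\le\lfloor\frac{2\lambda}{k}\rfloor\le\lceil\frac{2\lambda}{k}\rceil\le a_1$ together with the preceding theorem. In fact, your write-up supplies precisely the details the paper dismisses as ``a straightforward task'': the explicit derivative factorizations $(N-\Lambda)(N-4\lambda+3\Lambda)$ and $(N-\Lambda)(k\Lambda-2\lambda)$ on the two ranges, the sign check at the junction $\Lambda=(k-1)^{h-1}$, and the three-case comparison, so yours is a more complete rendering of the same argument.
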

\begin{proof}
Take the second term of the lower bound as a function of $\Lambda$:
$$f(x):=\frac{\left((k-1)^h-x\right)^2}{2\lambda-3x+(k-1)^h-2\max\left(0,\frac{x^2}{2(k-1)^{(h-1)}}-\frac{x}{2}\right)}.$$
We only need to check the behavior of $f$ around $ x = \frac{2\lambda}{k}$ by looking at its derivative. It is a straightforward task; therefore, we only present the final results. If $\lambda\ge\frac{k(k-1)^{h-1}}{2}$, then there is a local minimum at $\frac{2\lambda}{k}$. Otherwise, $f$ is a convex function. Hence, we can substitute the two closest integers to the average of the signature to obtain a lower bound on the order of vertex-girth-regular graphs.
\end{proof}
Finally, using spectral graph theory, we generalize a lower bound for the order of edge-girth-regular graphs that appeared in \cite{P23}. Consider the adjacency matrix of a $vgr(n,k,g,\lambda)$ and denote its eigenvalues by $\lambda_1\ge\ldots\ge\lambda_n$. Since the graph is $k$-regular, the largest eigenvalue is $k$. If the graph is bipartite, then the smallest eigenvalue is $-k$. Moreover, the sum of the $\ell$-th powers of the eigenvalues is the sum of the numbers of closed walks of length $\ell$ rooted at the vertices of the graph (summed through all vertices of the graph). We note that in a $k$-regular graph of girth $g$, the number of {\em cycle-free} closed walks of length $ \ell \leq g $ rooted at any vertex is independent of the choice of the vertex. We denote the number of cycle-free closed walks of length $\ell \leq g  $ rooted at a(ny)  vertex by $c(\ell,k)$, and observe that $c(\ell,k) = 0 $ for odd $ \ell \leq g  $. For even $\ell$, it is an $\frac{\ell}{2}$-th degree polynomial of $k$. For example, the first four $c(\ell,k)$ polynomials (for even lengths $\ell$) can be easily shown to be equal to the following:
$$
\begin{aligned}
c(2,k)=&k,\\
c(4,k)=&2k^2-k,\\
c(6,k)=&5k^3-6k^2+2k,\\
c(8,k)=&14k^4-28k^3+20k^2-5k.
\end{aligned}
$$
In particular, using the polynomials $ c(\ell,k)$, the 
number of closed walks of length $g/2$ in any $(k,g)$-graph is
$$\sum_{i=1}^n\lambda_i^{\frac{g}{2}}=n \cdot c\left(\frac{g}{2},k\right),$$
and the 
number of closed walks of length $g$ in a $vgr(n,k,g,\lambda)$-graph is equal to 
$$\sum_{i=1}^n\lambda_i^g=n \cdot \left(c(g,k)+2\lambda\right). $$
Next, we apply the inequality between the quadratic and arithmetic means to the set $\{\lambda_2^{g/2},\ldots,\lambda_n^{g/2}\}$ to obtain a lower bound on the order $n$ of a $vgr(n,k,g,\lambda)$-graph of even girth. For the bipartite case, we repeat the process for the set $\{\lambda_2^{g/2},\ldots,\lambda_{n-1}^{g/2}\}$ because $\lambda_n=-k.$ We obtain the following theorem, which is a direct generalization of Theorem 3.14 in \cite{P23}.
\begin{theorem}\label{lowerbound_pi}
Let $G$ be a $vgr(n,k,g,\lambda)$-graph, where $g$ is even.\\
If $g\equiv0$ (mod $4$), then
$$n(k,g,\lambda)\ge\frac{c(g,k)+2\lambda+k^{g}-2c(\frac{g}{2},k)k^{\frac{g}{2}}}{c(g,k)-c^2(\frac{g}{2},k)+2\lambda},$$
$$n_2(k,g,\lambda)\ge2\frac{c(g,k)+2\lambda+k^{g}-2c(\frac{g}{2},k)k^{\frac{g}{2}}}{c(g,k)-c^2(\frac{g}{2},k)+2\lambda}.$$
If $g\equiv2$ (mod $4$), then
$$n(k,g,\lambda)\ge\frac{c(g,k)+2\lambda+k^g}{c(g,k)+2\lambda},$$
$$n_2(k,g,\lambda)\ge\frac{2k^g}{c(g,k)+2\lambda}.$$
\end{theorem}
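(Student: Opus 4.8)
The plan is to feed the two trace identities established just above, namely $\sum_{i=1}^n \lambda_i^{g/2} = n\,c(g/2,k)$ and $\sum_{i=1}^n \lambda_i^{g} = n\,(c(g,k)+2\lambda)$, into a single Cauchy--Schwarz (equivalently, quadratic-versus-arithmetic mean) estimate and then solve the resulting quadratic inequality for $n$. Write $C=c(g/2,k)$ and $D=c(g,k)+2\lambda$ for brevity. First I would treat the general (non-bipartite) case: discard the Perron eigenvalue $\lambda_1=k$ and apply the power-mean inequality to the $n-1$ reals $\lambda_2^{g/2},\dots,\lambda_n^{g/2}$, which gives $(n-1)\sum_{i=2}^n\lambda_i^{g}\ge\bigl(\sum_{i=2}^n\lambda_i^{g/2}\bigr)^2$. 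Substituting $\sum_{i=2}^n\lambda_i^{g/2}=nC-k^{g/2}$ and $\sum_{i=2}^n\lambda_i^{g}=nD-k^{g}$ and expanding, the constant terms $k^g$ cancel and the inequality rearranges to $n\bigl[n(D-C^2)-(k^g+D-2Ck^{g/2})\bigr]\ge0$.

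The key step is then to divide by the positive quantity $n$ and by $D-C^2$, yielding $n\ge \dfrac{k^g+D-2Ck^{g/2}}{D-C^2}$, which is exactly the claimed bound after resubstituting $C$ and $D$. This is valid provided $D-C^2=c(g,k)-c^2(g/2,k)+2\lambda>0$, which I expect to be the main point requiring justification. I would argue it combinatorially: concatenating two cycle-free closed walks of length $g/2$ rooted at a common vertex produces a cycle-free closed walk of length $g$ (both halves stay inside the Moore tree, so no girth-cycle is traversed), and these $c^2(g/2,k)$ walks form a subset of all such length-$g$ walks, whence $c(g,k)\ge c^2(g/2,k)$; since $\lambda\ge0$ the denominator is strictly positive for $g\ge4$.

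Second, I would specialize to the two residue classes. When $g\equiv 0\pmod 4$ the exponent $g/2$ is even, $C=c(g/2,k)$ is genuinely nonzero, and the formula is kept as stated. When $g\equiv 2\pmod 4$ the length $g/2$ is odd with $g/2\le g$, so by the stated vanishing of $c(\ell,k)$ at odd arguments we have $C=0$; then both the cross term $2Ck^{g/2}$ and the $C^2$ in the denominator disappear, and the bound collapses to $n\ge(k^g+D)/D=(c(g,k)+2\lambda+k^g)/(c(g,k)+2\lambda)$, as claimed.

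Finally, the bipartite refinements come from additionally discarding $\lambda_n=-k$ and applying the same inequality to the $n-2$ values $\lambda_2^{g/2},\dots,\lambda_{n-1}^{g/2}$. Since $g$ is even, $(-k)^g=k^g$, so the second sum becomes $nD-2k^g$, while the first sum depends on the parity of $g/2$. For $g\equiv 0\pmod4$, $(-k)^{g/2}=k^{g/2}$ gives $\sum_{i=2}^{n-1}\lambda_i^{g/2}=nC-2k^{g/2}$, and the identical algebra produces exactly twice the non-bipartite bound. For $g\equiv 2\pmod4$, $(-k)^{g/2}=-k^{g/2}$ together with $C=0$ forces $\sum_{i=2}^{n-1}\lambda_i^{g/2}=0$, so the inequality reduces to $(n-2)(nD-2k^g)\ge0$ and hence $n\ge 2k^g/(c(g,k)+2\lambda)$. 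The only genuine obstacles throughout are the sign bookkeeping in these parity splits and confirming the positivity of $D-C^2$; the remaining algebraic rearrangement is routine.
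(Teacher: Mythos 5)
Your proposal is correct and follows essentially the same route as the paper: the paper's (terse) argument is exactly to feed the two trace identities into the quadratic--arithmetic mean inequality applied to $\{\lambda_2^{g/2},\ldots,\lambda_n^{g/2}\}$ (dropping also $\lambda_n=-k$ in the bipartite case), and your write-up is a fleshed-out version of that, including the parity bookkeeping and a justification of $c(g,k)\ge c^2(\frac{g}{2},k)$ that the paper omits. Only a cosmetic remark: to get \emph{strict} positivity of the denominator you should invoke $\lambda\ge 1$ (every $vgr(n,k,g,\lambda)$-graph has girth $g$ and hence $\lambda\ge1$), or note that the concatenation map is not surjective, rather than ``$\lambda\ge0$''.
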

Finally, we remark that these lower bounds can often be improved by noticing that for a $vgr(v,k,g,\lambda)$-graph, $vk$ is even because of the handshaking lemma and $v\lambda$ must be a multiple of $g$ since each vertex is contained in precisely $\lambda$ girth-cycles and each girth-cycle contains precisely $g$ vertices. This leads to the following observation:
\begin{observation}
\label{obs:divisibility}
There are $\frac{v\lambda}{g}$ cycles of length $g$ in a $vgr(v,k,g,\lambda)$-graph.
\end{observation}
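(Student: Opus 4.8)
The plan is to prove Observation~\ref{obs:divisibility} by a single double-counting argument on the incidences between vertices and girth-cycles. Let $G$ be a $vgr(v,k,g,\lambda)$-graph, let $N$ denote the total number of cycles of length $g$ in $G$, and consider the set $\mathcal{I}$ of all pairs $(u,C)$ where $u \in V(G)$ and $C$ is a girth-cycle of $G$ containing $u$. The whole proof amounts to evaluating $|\mathcal{I}|$ in two ways and equating the results.

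First I would count $\mathcal{I}$ by summing over vertices. By the very definition of a $vgr(v,k,g,\lambda)$-graph, each of the $v$ vertices lies on exactly $\lambda$ girth-cycles, so this count gives $|\mathcal{I}| = v\lambda$. Next I would count $\mathcal{I}$ by summing over girth-cycles: a cycle of length $g$ passes through exactly $g$ distinct vertices, so each of the $N$ girth-cycles contributes precisely $g$ pairs to $\mathcal{I}$, giving $|\mathcal{I}| = Ng$.

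Equating the two expressions yields $v\lambda = Ng$, hence $N = \frac{v\lambda}{g}$, as claimed. As a byproduct, this also justifies the divisibility remark preceding the observation: since $N$ is a nonnegative integer, $v\lambda$ must be a multiple of $g$. There is no genuine obstacle in this argument, as it is an elementary incidence count; the point of isolating it as an observation is simply to record the exact number $\frac{v\lambda}{g}$ of girth-cycles so that the resulting integrality constraint can be invoked when sharpening the lower bounds derived in this section.
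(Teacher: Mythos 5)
Your double-counting of vertex--cycle incidences is correct and is exactly the argument the paper uses: the text preceding the observation justifies it by noting that each vertex lies on precisely $\lambda$ girth-cycles while each girth-cycle contains precisely $g$ vertices. Your write-up simply formalizes this same incidence count, so there is nothing to add.
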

In other words, if $n(k,g,\lambda) \geq v$, but $v$ does not satisfy these conditions, then $n(k,g,\lambda) \geq v+1$ (and we can recursively apply this argument).  
\section{Non-existence results}\label{sec:nonExistence}

Our first result in this section is a generalization of Lemma~\ref{easy} and is reminiscent of a similar result proven in 
\cite{KMS22} for $k$-regular graphs in which all vertices have the same signature; and hence also for edge-girth-regular graphs. Since vertex-girth-regular graphs are not necessarily signature-regular, our result requires an independent proof which is based on ideas different from those in \cite{KMS22}. Moreover, we prove both the odd and the even girth cases, whereas~\cite{KMS22} only covers the even girth case. 

For $ k \geq 3 $ and $ g = 2s+1 \geq 3 $, let $ nc(k,g) = \frac{k(k-1)^s}{2} $. Then, the number of girth-cycles through any vertex of a
$ (k,g) $-graph is at most $ nc(k,g) $, and this number is equal to $ nc(k,g) $ if and only if the graph is a $(k,g)$-Moore graph (see Proposition~\ref{prop:elementalProperties}).

\begin{theorem}\label{non-exist-odd}
Let $ k \geq 3 $, $ g = 2s+1 \geq 7 $, and $ 0 < \epsilon \leq \frac{k-1}{2} $ be integers. Then there is no $vgr(n,k,g,nc(k,g)- \epsilon) $-graph.
\end{theorem}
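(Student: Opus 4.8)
The plan is to mimic the mechanism of Lemma~\ref{easy} at scale: assume a $vgr(n,k,g,nc(k,g)-\epsilon)$-graph $G$ exists and exhibit a single vertex forced to lie on strictly fewer than $nc(k,g)-\epsilon$ girth-cycles, contradicting regularity. Writing $g=2s+1$ with $s\ge 3$, I first set up, for an arbitrary vertex $u$, the Moore tree $\mathcal{T}^{u}_{k,s}$ exactly as in the proof of Proposition~\ref{prop:elementalProperties}(i). Girth-cycles through $u$ are in bijection with edges joining two leaves of $\mathcal{T}^{u}_{k,s}$, and the $k(k-1)^{s-1}$ leaves carry $k(k-1)^s$ non-parent half-edges in total. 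Since $\lambda=nc(k,g)-\epsilon$, precisely $2\epsilon$ of these half-edges leave the tree (they join a leaf to a vertex at distance $s+1$ from $u$); I call these the \emph{leaving edges} of $u$. As $0<\epsilon\le\frac{k-1}{2}$, every vertex has at most $k-1$ leaving edges.

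Next I would localise the deficiency. For each neighbour $v_j$ of $u$ let $o_j$ be the number of leaving edges emanating from the branch rooted at $v_j$, so that $\sum_j o_j=2\epsilon$. The girth-cycles through $v_j$ split into those using the edge $uv_j$, of which there are exactly $(k-1)^s-o_j$, and those avoiding it, which cannot exceed the combinatorial maximum $\binom{k-1}{2}(k-1)^{s-1}$; since the two counts sum to $\lambda=nc(k,g)-\epsilon$, this forces $o_j\le\epsilon$. The decisive point is that $\sum_j o_j=2\epsilon<k$, so by pigeonhole at least $k-2\epsilon\ge 1$ neighbours $v$ satisfy $o_v=0$: the edge $uv$ is \emph{saturated}, lying on the maximal possible $(k-1)^s$ girth-cycles, and all $2\epsilon$ leaving edges on $v$'s side are pushed onto its ``downward'' leaves.

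Around such a saturated edge the local structure becomes rigid, and exploiting this rigidity is the heart of the argument. Using the girth bound $g=2s+1$—a leaf cannot be adjacent to two leaves in the same branch, nor to two leaves of the opposite rooted tree, without closing a cycle shorter than $g$—I would show the leaves fall into perfectly matched families: each leaf on one side is joined to exactly one leaf in each branch of the other side. Then I trace a leaving edge $xz$, with $x$ a leaf of $\mathcal{T}^{u}_{k,s}$ and $z$ at distance $s+1$ from $u$, into the Moore tree $\mathcal{T}^{z}_{k,s}$: now $u$ lies \emph{outside} $\mathcal{T}^{z}_{k,s}$ while a neighbour of $u$ appears as a leaf of $\mathcal{T}^{z}_{k,s}$ adjacent to it, so the edge toward $u$ is itself a leaving edge of $z$. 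The saturation established above caps how many of the candidate partner-leaves of $z$ remain available, and the goal is to conclude that $z$ cannot accumulate $nc(k,g)-\epsilon$ girth-cycles—exactly the way the external neighbour $w$ starved the deficient vertex $v_1$ of triangles in Lemma~\ref{easy}.

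I expect the main obstacle to be the interaction of the Moore trees of adjacent (and nearby) vertices. Because of the matching leaf--leaf edges, a vertex lying ``outside'' one Moore tree re-enters another through a shortcut, so the naive inside/outside bookkeeping undercounts girth-cycles; ignoring precisely the shortcut-cycles through a saturated edge $uv$ would falsely annihilate the theorem already for $g=5$ Moore graphs such as the Petersen graph, which signals where the care must go. Quantifying exactly how many partner-leaves of $z$ are blocked by the saturation at $u$ and $v$ is therefore the crux, and it is here that the hypotheses $g\ge 7$ (hence $s\ge 3$, giving enough tree depth to keep the relevant distance layers disjoint) and $\epsilon\le\frac{k-1}{2}$ (so the defect cannot spread across more than half of the $k$ branches) enter essentially.
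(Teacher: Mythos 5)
Your setup coincides with the paper's opening moves: the Moore ball $\Gamma_u^s$ around an arbitrary vertex $u$, the bijection between girth-cycles through $u$ and ``horizontal'' leaf--leaf edges, and the conclusion that exactly $2\epsilon$ edges leave the ball toward distance $s+1$. Your localisation step is also correct and is in fact a nice refinement not explicitly in the paper: since cycles through $v_j$ avoiding $uv_j$ number at most $\binom{k-1}{2}(k-1)^{s-1}$ (a leaf of one branch can meet at most one leaf of another branch, by girth), one gets $o_j\le\epsilon$, and pigeonhole on $\sum_j o_j=2\epsilon\le k-1$ gives at least one saturated branch. But from that point on the proposal stops being a proof. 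The decisive step --- showing that the endpoint $z$ of a leaving edge (or any other vertex) cannot lie on $nc(k,g)-\epsilon$ girth-cycles --- is never executed: no inequality is derived, the hypotheses $g\ge 7$ and $\epsilon\le\frac{k-1}{2}$ are invoked only in a promissory way, and you yourself label ``quantifying exactly how many partner-leaves of $z$ are blocked'' as the unresolved crux. There is also a structural mismatch in the plan: the ``perfectly matched families'' rigidity you establish holds only for leaves inside the saturated branch (the one with $o_v=0$), whereas the leaving edge $xz$ you propose to trace necessarily emanates from a branch with $o_j\ge 1$; it is not explained how saturation of the edge $uv$ constrains $z$ at all, so the intended starvation mechanism is not merely unquantified but unconnected to the object it is supposed to starve.

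For comparison, the paper closes the argument not at the white vertex $z$ at distance $s+1$ but at the black vertex $v$ at distance $s$ incident with a leaving (black-white) edge. It decomposes $\Gamma_v^s$ into the $k$ branches $\Gamma_{v,w_i}^s$ at the neighbours of $v$ and the $k-1$ sub-branches $\Gamma_{v,w_1,w_{1,i}}^s$ below the white neighbour $w_1$. Because these pieces are pairwise disjoint (apart from shared roots) and there are only $2\epsilon\le k-1$ black-white edges in all of $\Gamma$, at least $k-1-r$ sub-branches are entirely white and at least one branch is entirely black; every potential horizontal edge between an all-white sub-branch and an all-black branch would itself have to be black-white, and only $2\epsilon-r-t-1$ black-white edges remain unaccounted for. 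Hence at least $t(k-1-r)(k-1)^{s-2}-(2\epsilon-r-t-1)\ge 2(k-1)-2\epsilon\ge k-1>\epsilon$ horizontal edges at $v$ are missing, contradicting that exactly $\epsilon$ are missing. It is precisely in this count that $s\ge 3$ (so that $(k-1)^{s-2}\ge k-1$) and $\epsilon\le\frac{k-1}{2}$ do real work. Your proposal would need an analogous explicit count --- centered at $v$ or at $z$ --- to become a proof; as written, it is a plan whose central step is missing.
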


\begin{proof}
Suppose $ k,g $ and $ \epsilon $ satisfy the above requirements, and, by means of contradiction, let us assume that 
$\Gamma$ is a $vgr(k,g,nc(k,g)-\epsilon) $. Let $ u $ be an arbitrary vertex of $\Gamma$, and let $ \Gamma_u^s $
be the subgraph of $\Gamma$ induced by the union of the sets $ N_{\Gamma}(u,i) = \{ v \in V(\Gamma) \; | \; 
d_{\Gamma}(u,v)=i \} $, $ 0 \leq i \leq s $. Let us call the edges of $ \Gamma_u^s $ connecting any two vertices of distance $s$
from $u$ {\em horizontal} and note that the number of $g$-cycles through $u$ is equal to the number of these horizontal
edges. Since the number of $g$-cycles through $u$ is equal to $ nc(k,g)-\epsilon $ and $nc(k,g)$ is the number of pairs
of distinct vertices of distance $s$ from $u$ that could potentially have an edge between them, there must exist exactly $\epsilon$ pairs of these vertices that are not adjacent 
in $\Gamma$. Since each vertex of $\Gamma$ is assumed to be of degree $k$, this means that each pair of non-adjacent
vertices $v_1, v_2 $ of distance $s$ from $u$ gives rise to two edges connecting a vertex of distance $s$ from
$u$, namely $v_1$ and $v_2$, to a vertex of distance $s+1$ from $u$. Thus, $\Gamma$ contains exactly $2 \epsilon$ edges 
between $ N_{\Gamma}(u,s) $ and $ N_{\Gamma}(u,s+1) $. To simplify our arguments, let us call the vertices of 
$ \Gamma_u^s $ black, and the rest of the vertices of $ \Gamma $ white. Thus, $\Gamma$ contains exactly $2 \epsilon$ 
black-white edges (i.e., having different colored endpoints) and all the other edges are either black (both ends are black) or white
(both ends are white). Let $v$ be a black vertex adjacent to a black-white edge, and consider the induced subgraph $ \Gamma_v^s $
induced by the union of the sets $ N_{\Gamma}(v,i) $, $ 0 \leq i \leq s $. Once again, the number of $g$-cycles containing
$v$ is equal to the number of horizontal edges (i.e., edges connecting two vertices from  $ N_{\Gamma}(v,s) $ in 
$ \Gamma_v^s $). Let $ w_1,w_2,\ldots,w_k $ be the neighbors of $v$. Let $ \Gamma_{v,w_i}^s $, $ 1 \leq i \leq k $,  denote the induced subgraphs of $ \Gamma_v^s $ `rooted' at the vertices $w_i$ which are disjoint subgraphs induced by the unions of vertices $ N_{\Gamma}(v,j) \cap N_{\Gamma}(w_i,j-1) $, 
$ 1 \leq j \leq s $ (in other words, the subgraphs consist of $w_i$ and vertices of distance at least $2$ and at most $s$ from $v$ whose shortest path toward $v$ contains $w_i$). We may assume without loss of generality that the vertices $ w_1,w_2,\ldots,w_{\ell} $, $ \ell \geq 1 $, are 
white (since $v$ was chosen to be incident with at least one black-white edge), and the 
vertices $ w_{\ell +1}, w_{\ell +2}, \ldots, w_k $ are black, where $ 1 \leq \ell < k $ (since $ 2 \epsilon \leq k-1 $ and thus, at least one of the neighbors of $v$ is black). 

To complete our argument, 
let us consider the neighbors $ w_{1,1},w_{1,2}, \ldots, w_{1,k-1} $ of the white vertex $ w_1 $ distinct from $v$. Each of 
the vertices $ w_{1,1},w_{1,2}, \ldots, w_{1,k-1} $ determines an induced subgraph $ \Gamma_{v,w_1,w_{1,i}}^s $ of $ \Gamma_{v,w_1}^s $ consisting of $w_1$, the edge $w_1w_{1,i}$, and the subgraph of $ \Gamma_{v,w_1}^s $ induced by the subset of vertices of $ \Gamma_{v,w_1}^s $ comprised of vertices of distance at least $2$ from $v$ whose shortest path to
$v$ contains $w_{1,i} $. Any two of these subgraphs share exactly one vertex; the vertex $w_1$.

There are the total of $2 \epsilon \leq k-1 $ black-white edges contained in $\Gamma$. Let us 
assume that there are $r$ subgraphs among the $ \Gamma_{v,w_1,w_{1,i}}^s $, $ 1 \leq i \leq k-1 $, which contain at least one black-white edge.
Note that $ r < k-1 $ as otherwise the subgraphs would contain at least $k-1$ black-white edges (recall that
they only share a vertex) which together with the black-white $vw_1$ would make for $k$ black-white edges, while we assume 
that the number of black-white edges does not exceed $k-1$. Next, let us apply the same kind of argument to the $k-1$ subgraphs $ \Gamma_{v,w_i}^s $, $ 2 \leq i \leq k $. Since $v$ is black and $vw_1$ is black-white, at least one of these
subgraphs does not contain a black-white edge, and therefore consists entirely of black vertices. 
Let us denote the number of subgraphs $ \Gamma_{v,w_i}^s $, 
$ 2 \leq i \leq k $ that do contain at least one black-white edge by $t$, we have argued that $ 0 \leq t < k-1 $.
Now, consider the $k-1-r > 0 $ subgraphs $ \Gamma_{v,w_1,w_{1,i}}^s $, $ 1 \leq i \leq k-1 $, that {\em do not contain}
a black-white edge. Since $w_1$ is white and it is contained in each of these subgraphs, those subgraphs that do not contain
a black-white edge must consist entirely of white vertices. Each of them contains $(k-1)^{s-2}$ vertices of distance $s$ from
$v$, and hence there are $(k-1-r)(k-1)^{s-2}$ white vertices of distance $s$ from $v$ whose shortest path to $v$ contains 
$w_1$. Since the girth of $\Gamma$ is assumed to be equal to $2s+1 \geq 7$, any horizontal edge emanating from these
white vertices has to connect them to exactly one of the subgraphs $ \Gamma_{v,w_i}^s $, $ 2 \leq i \leq k $, with $t \geq 1$ of them consisting entirely of black vertices. This means in particular, that all the potential $t(k-1-r)(k-1)^{s-2}$ horizontal edges (with respect to $v$) connecting the $(k-1-r)(k-1)^{s-2}$ white vertices to the black $ \Gamma_{v,w_i}^s $ would have to be
black-white. Recall that we assume that $ 2s+1 \geq 7 $ or that $ s \geq 3 $ and hence $s-2 \geq 1$. Since $\Gamma$ 
contains exactly $2\epsilon$ black-white edges, one of which is the edge $vw_1$, and the subgraphs 
$ \Gamma_{v,w_1,w_{1,i}}^s $, $ 1 \leq i \leq k-1 $, and $ \Gamma_{v,w_i}^s $, $ 2 \leq i \leq k $, contain at least $r+t$
black-white edges, there at most $2\epsilon - r - t - 1$ black-white edges to be used between $ \Gamma_{v,w_1}^s $ and
$ \Gamma_{v,w_i}^s $, $ 2 \leq i \leq k $. This means that of the $t(k-1-r)(k-1)^{s-2}$ potential horizontal black-white edges between $ \Gamma_{v,w_1}^s $ and $ \Gamma_{v,w_i}^s $, $ 2 \leq i \leq k $, {\em at least} 
$t(k-1-r)(k-1)^{s-2} - (2\epsilon - r - t - 1)$ are not edges of $\Gamma$. Hence, $\Gamma$ misses at least this many edges that would be horizontal edges with respect to $v$.  Since $ s \geq 3 $ and $ t \geq 1 $, we obtain
\[ t(k-1-r)(k-1)^{s-2} - (2\epsilon - r - t - 1) \geq (k-1-r)(k-1) -2 \epsilon + r + 1 .\]
Furthermore, since $ k \geq 3 $, the value $ (k-1-r)(k-1) -2 \epsilon + r + 1 $ is minimized as a function of $r$ when 
$r$ is maximal possible, (i.e., when $ r = k-2 $). This yields:
\[ (k-1-r)(k-1) -2 \epsilon + r + 1 \geq (k-1-(k-2))(k-1) -2 \epsilon + (k-2) + 1 = 2(k-1) - 2 \epsilon .\]
The assumption $ \epsilon \leq \frac{k-1}{2} $ yields then that
\[ 2(k-1) - 2 \epsilon \geq k-1 > \epsilon, \]
which means that the number of missing horizontal edges with respect to $v$ is bigger than $\epsilon$. The final 
contradiction now follows from the fact that the number of girth-cycles through $v$ is the number of horizontal edges
with respect to $v$ which is smaller than $ nc(k,g)- \epsilon $, which contradicts the assumption that $\Gamma$ is a
$vgr(n,k,g,nc(k,g)- \epsilon)$-graph.
\end{proof}

Next, consider $ k \geq 3 $ and $ g = 2s \geq 4 $, and let $ nc(k,g) = \frac{k(k-1)^s}{2} $. Once again, the number of girth-cycles through any vertex of a
$ (k,g) $-graph is at most $ nc(k,g) $, and this number is equal to $ nc(k,g) $ if and only if the graph is a $(k,g)$-Moore graph.

\begin{theorem}\label{non-exist-even}
Let $ k \geq 3 $, $ g = 2s \geq 4 $, and $ 0 < \epsilon < k-1 $ be integers. Then there is no $vgr(n,k,g,nc(k,g)- \epsilon) $-graph.
\end{theorem}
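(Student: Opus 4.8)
The plan is to reduce the statement to a purely local computation at a single vertex, exploiting the fact that for even girth the girth-cycles through a fixed vertex are counted by a \emph{convex} quantity. Fix an arbitrary vertex $u$ and let $T$ be the subgraph induced by all vertices at distance at most $s-1$ from $u$; since $G$ has girth $2s$, $T$ is a tree whose leaves are exactly the $k(k-1)^{s-2}$ vertices at distance $s-1$ from $u$. First I would argue that every girth-cycle through $u$ has a unique ``antipodal'' vertex $w$ at distance exactly $s$ from $u$, and that the two vertices of the cycle adjacent to $w$ are leaves of $T$ lying in different branches at $u$; conversely, any vertex $w$ at distance $s$ together with an unordered pair of leaves adjacent to it yields a genuine girth-cycle. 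Establishing this correspondence is the one place requiring care, and I expect it to be the main obstacle: one must verify, using the girth bound, that consecutive vertices of a girth-cycle through $u$ have strictly increasing distance from $u$ up to the antipode, and that two leaves sharing a common neighbour at distance $s$ cannot lie in the same branch (otherwise a cycle shorter than $2s$ would appear, so that no same-branch pairs need to be subtracted).

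Once this is in place, writing $d_w$ for the number of leaves of $T$ adjacent to a distance-$s$ vertex $w$, the number of girth-cycles through $u$ equals $\sum_w \binom{d_w}{2}$. A short girth argument then shows that each leaf sends exactly its $k-1$ non-tree edges to vertices at distance $s$, so that $\sum_w d_w = k(k-1)^{s-1}$ \emph{exactly}, while obviously $d_w \le k$. Substituting the hypothesis that $u$ lies in $nc(k,g)-\epsilon = \frac{k(k-1)^s}{2}-\epsilon$ girth-cycles, I would rewrite the deficiency, using $\frac{k(k-1)^s}{2} = \frac{k-1}{2}\sum_w d_w$, as
\[
\epsilon \;=\; \frac{k(k-1)^s}{2} - \sum_w \binom{d_w}{2} \;=\; \frac{1}{2}\sum_w d_w(k-d_w).
\]

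It then remains to show that this deficiency cannot lie strictly between $0$ and $k-1$. Call a distance-$s$ vertex \emph{partial} if $1 \le d_w \le k-1$; only partial vertices contribute to the last sum. Since $\sum_w d_w = k(k-1)^{s-1} \equiv 0 \pmod{k}$ while the full ($d_w=k$) and empty ($d_w=0$) vertices contribute $0 \pmod{k}$, the $d_w$ of the partial vertices sum to a positive multiple of $k$; as each is at most $k-1$, there must be at least two partial vertices. Each of them contributes $d_w(k-d_w) \ge k-1$ (the minimum of $d(k-d)$ over $1 \le d \le k-1$ is attained at the endpoints $d\in\{1,k-1\}$), so $\epsilon \ge \frac{1}{2}\cdot 2(k-1) = k-1$, contradicting $\epsilon < k-1$. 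I would emphasize that this argument is entirely local and uses only that \emph{one} vertex lies in $nc(k,g)-\epsilon$ girth-cycles; the contrast with the odd case of Theorem~\ref{non-exist-odd} is that there the count is linear rather than convex in the relevant degrees, which is precisely why the odd case needs a global propagation argument over neighbours and yields only the weaker range $\epsilon \le \frac{k-1}{2}$.
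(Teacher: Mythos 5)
Your argument is correct, and it takes a genuinely different route from the paper's proof. The paper roots the Moore tree at an \emph{edge} $u_1u_2$ and reuses the partition $\mathcal{A}_{u_1},\mathcal{B}_{u_1},\mathcal{C}_{u_1}$ from Proposition~\ref{prop:elementalProperties}(ii), arguing by perturbation from the Moore configuration: one missing horizontal edge already forces $|\mathcal{A}_{u_1}|+|\mathcal{B}_{u_1}|+|\mathcal{C}_{u_1}|\le nc(k,g)-(k-1)$, and each further missing edge can only lower the total. You instead root at a \emph{vertex} $u$ and derive an exact closed formula: since the radius-$(s-1)$ ball is an induced tree, every leaf sends all $k-1$ of its non-tree edges to distance-$s$ vertices, and two leaves with a common distance-$s$ neighbour automatically lie in different branches, the number of girth-cycles through $u$ is exactly $\sum_w \binom{d_w}{2}$, whence the identity $\epsilon=\frac{1}{2}\sum_w d_w(k-d_w)$ with $\sum_w d_w = k(k-1)^{s-1}$. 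Nonexistence then follows from the endpoint bound $d(k-d)\ge k-1$ for $1\le d\le k-1$ together with your mod-$k$ step forcing at least two partial vertices; note that this integrality step is genuinely necessary, not cosmetic, since a single partial vertex with $d_w=1$ would only yield deficiency $\frac{k-1}{2}<k-1$. What your approach buys: it replaces the paper's informally justified monotonicity claim (``every additional missing horizontal edge decreases \dots by more than it increases \dots'') with an exact identity, it pins down the sharp dichotomy that the deficiency at \emph{any} vertex of \emph{any} $(k,2s)$-graph is either $0$ or at least $k-1$ (so vertex-girth-regularity of the whole graph is never used, giving a slightly stronger local statement), and it makes visible exactly when the value $nc(k,g)-(k-1)$ is attained. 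What the paper's approach buys is brevity: the $\mathcal{A}/\mathcal{B}/\mathcal{C}$ machinery is already set up in Proposition~\ref{prop:elementalProperties}, so its proof is a short addendum, whereas yours requires establishing the vertex-rooted bijection from scratch.
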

\begin{proof}
In similarity to the proof of Proposition~\ref{prop:elementalProperties}, the argument for the even girth case is quite different from the argument used in the proof of Theorem~\ref{non-exist-odd}. For the sake of obtaining a contradiction, let $G$ be a $vgr(n,k,g,nc(k,g)- \epsilon) $-graph,
$ 0 < \epsilon < k-1 $, let $u_1u_2$ be an edge of $G$ and let the sets of girth-cycles $\mathcal{A}_{u_1}$, $\mathcal{B}_{u_1}$ and $\mathcal{C}_{u_1}$ be defined as in Proposition~\ref{prop:elementalProperties}. Since $\epsilon>0$, at least one horizontal edge is missing and therefore $|\mathcal{A}_{u_1}|<(k-1)^s$
and $|\mathcal{B}_{u_1}|\leq ((k-1)^{s-1}-1){k-1 \choose 2}+{k-2 \choose 2}$. Since every additional missing horizonal edge decreases both $|\mathcal{A}_{u_1}|$ and $|\mathcal{B}_{u_1}|$ by more than it (potentially) increases $|\mathcal{C}_{u_1}| $, the maximum of the sum $|\mathcal{A}_{u_1}|+|\mathcal{B}_{u_1}|+|\mathcal{C}_{u_1}|$ is attained when there is exactly one missing horizontal edge, in which case, $|\mathcal{A}_{u_1}|=(k-1)^s-1$, $|\mathcal{B}_{u_1}|=((k-1)^{s-1}-1){k-1 \choose 2}+{k-2 \choose 2}$, and $|\mathcal{C}_{u_1}|=0$. 
Thus, the number of girth-cycles containing $u_1$ (i.e., the sum $|\mathcal{A}_{u_1}|+|\mathcal{B}_{u_1}|+|\mathcal{C}_{u_1}|$) is bounded from above by 
$ (k-1)^s-1 + ((k-1)^{s-1}-1){k-1 \choose 2}+{k-2 \choose 2} = nc(k,g)-(k-1)$. This leads to a contradiction with the assumption that $\epsilon < k-1$,
and therefore no such $G$ can exist.
\end{proof}

Interestingly, we could not prove Theorem~\ref{non-exist-odd} for the case $s=2$. Consulting the tables at the end of our article suggests that no $ vgr(n,3,5,\lambda)$-graphs exist for $ \lambda = 4 $ or 
$ \lambda = 5 $, i.e., for $ \epsilon = 1 $ or $ \epsilon = 2 $ (where $ 1 \leq \frac{3-1}{2} $). This suggests that
Theorem~\ref{non-exist-odd} might hold for $s=2$ (i.e., $g=5$) as well. In addition, Observation~\ref{obs:EgrToVgr} suggests that the theorem may even hold for $s=1$ (i.e., $ g = 3 $).
Maybe even more intriguingly, inspecting our computational results summarized at the end of this article opens space for further non-existence results for cases when $\lambda$
is not particularly close to $ nc(k,g) $. Similar results 
exist in case of edge-girth-regular graphs, where, for 
example, it has been shown in \cite{D24} that no 
$(3,7,6)$ and no $ (3,8,14) $ edge-girth-regular graphs exist. This raises the possibility of finding further arithmetic conditions that would
yield the non-existence of $ vgr(n,k,g,\lambda)$-graphs.
Having brought up the connections between edge-girth-regular, girth-regular,
and vertex-girth-regular graphs, we also wish to point out
that non-existence results concerning vertex-girth-regular 
graphs yield the non-existence of corresponding edge-girth-regular and girth-regular graphs as well.

\section{Exhaustive generation algorithm}\label{sec:generationAlgo}
Goedgebeur and the second author~\cite{GJ24} described an algorithm to exhaustively generate all $egr(v,k,g,\lambda)$-graphs for given integers $v$, $k$, $g$ and $\lambda$. In the current paper, we adapt this algorithm to generate all $vgr(v,k,g,\lambda)$-graphs and add a different heuristic and pruning rule that speed up the algorithm without affecting the exhaustiveness guarantee.

The algorithm (pseudo code shown in Algorithm~\ref{algo:recAddEdges} and~\ref{algo:genAlgo}) expects as input four integers $v$, $k$, $g$ and $\lambda$ and works as follows: the algorithm starts from a $(k,g)$ Moore tree and adds isolated vertices until there are $v$ vertices in total (note that this graph occurs as a subgraph of every $vgr(v,k,g,\lambda)$-graph). The algorithm then recursively adds edges to this graph in all possible ways such that no $vgr(v,k,g,\lambda)$-graphs are excluded from the search space. 

\begin{algorithm}[ht!]
\caption{recursivelyAddEdges(Integer $v$, Integer $k$, Integer $g$, Integer $\lambda$, Graph $G=(V,E)$, Set validEdgesToBeAdded)}
\label{algo:recAddEdges}
  \begin{algorithmic}[1]
		\STATE // Each call adds one edge to $G$
        \IF{One of the pruning rules can be applied}
		  \RETURN
        \ENDIF
		\STATE // $G$ has the right number of edges
		\IF{$|E|=\frac{vk}{2}$}
			\IF{$G$ is a $vgr(v,k,g,\lambda)$-graph}
				\STATE Output $G$
			\ENDIF
			\RETURN
        \ENDIF
        \STATE // Apply heuristic for choosing the next edge $e$ to consider
        \STATE $u \gets \arg\min_{w \in V(G), deg_G(w)<k}(|\{e \in \text{validEdgesToBeAdded and }e\text{ incident with }w\}|-(k-deg_G(w)))$
        \STATE $e \gets \text{arbitrary edge from validEdgesToBeAdded incident with }u$
        \STATE // Option 1: add this edge to $G$
        \STATE $G' \gets (V,E\cup\{e\})$
        \STATE $\text{newValidEdgesToBeAdded} \gets \text{update}(\text{validEdgesToBeAdded},G')$
        \STATE $\text{recursivelyAddEdges}(v,k,g,\lambda,G',\text{newValidEdgesToBeAdded})$
        \STATE // Option 2: do not add this edge to $G$
        \STATE $\text{recursivelyAddEdges}(v,k,g,\lambda,G,\text{validEdgesToBeAdded}\setminus\{e\})$
  \end{algorithmic}
\end{algorithm}

\begin{algorithm}[ht!]
\caption{generateAllVertexGirthRegularGraphs(Integer $v$, Integer $k$, Integer $g$, Integer $\lambda$)}
\label{algo:genAlgo}
  \begin{algorithmic}[1]
		\STATE // This function generates all $vgr(v,k,g,\lambda)$-graphs
		\STATE $T \gets (k,g)\text{ Moore tree}$
		\STATE // There are no $vgr(v,k,g,\lambda)$-graphs if $v$ is too small
		\IF{$v < |V(T)|$}
			\RETURN
            \ENDIF
		\STATE // Add $v-|V(T)|$ isolated vertices
		\STATE $G \gets \text{addIsolatedVertices}(T,v-|V(T)|)$
		\STATE $\text{validEdgesToBeAdded} \gets \text{calculateValidEdgesToBeAdded}(G)$
		\STATE $\text{recursivelyAddEdges}(v,k,g,\lambda,G,\text{validEdgesToBeAdded})$
  \end{algorithmic}
\end{algorithm}

In order to obtain an efficient algorithm, a heuristic is used for the order in which the edges are added as well as several pruning rules that allow the algorithm to backtrack as soon as a graph is encountered for which the algorithm can decide that it cannot occur as a subgraph of any $vgr(v,k,g,\lambda)$-graph. For each edge that the algorithm considers, it will branch into two possibilities (adding the edge in the first branch and not adding the edge in the second branch) and the algorithm keeps track of which edges can potentially be added to the graph. The new heuristic that the algorithm employs is to choose the next edge to consider as an edge which is incident to the vertex $u$ for which the difference between the number of potential valid edges that could be added incident with $u$ and $k$ minus the degree of $u$ is minimized. For example, if $k=5$ and $u$ has degree $3$ and there are only $2$ possibilities for edges that could potentially be added incident with $u$, then the difference will be equal to $0$ and the algorithm will add these two missing edges as soon as possible. This heuristic works well, because these two edges must be added eventually and adding them sooner further constrains the search space. After each iteration, the algorithm will mark potential edges to add as invalid if their addition would result in either:
\begin{itemize}
    \item a graph with girth smaller than $g$;
    \item a graph in which some vertex has degree larger than $k$;
    \item a graph in which some vertex is contained in more than $\lambda$ girth-cycles.
\end{itemize}

In each of these cases, the resulting graph can be excluded from the search space, because adding more edges will never lead to a $vgr(v,k,g,\lambda)$-graph.

Additionally, the algorithm prunes the current graph if it is isomorphic with a graph that was encountered previously during the search, because this cannot result in any new $vgr(v,k,g,\lambda)$-graphs which were not previously generated. Moreover, the algorithm prunes the current graph if it has a vertex $u$ of degree less than $k$ for which the number of potential edges that can be added incident with $u$ is strictly smaller than $k$ minus the degree of $u$, because this means that $u$ cannot obtain degree $k$ anymore. Finally, we also employ a new pruning rule based on the following proposition.

\begin{proposition}
Let $G$ be a graph and let $cyc(G,g,u)$ denote the number of cycles in $G$ of length $g$ containing vertex $u$. If, for given integers $g$, $k$ and $\lambda$, there is a vertex $u \in V(G)$ with degree $k$ such that $$(k-2)\lambda+2cyc(G,g,u)-\sum_{u' \in N_G(u)} cyc(G,g,u')<0$$ or simultaneously $$(2-k)\lambda+\sum_{u' \in N_G(u)} cyc(G,g,u')-2\max_{u' \in N_G(u)}(cyc(G,g,u')) \geq 0$$ and $$(k-2)\lambda+cyc(G,g,u)-\sum_{u' \in N_G(u)} cyc(G,g,u')+\max_{u' \in N_G(u)}(cyc(G,g,u'))<0,$$ then $G$ does not occur as a subgraph of any $vgr(v,k,g,\lambda)$-graph.
\end{proposition}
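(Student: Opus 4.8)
The plan is to argue by contradiction: assume $G$ is a subgraph of some $vgr(v,k,g,\lambda)$-graph $H$ and show that each of the three displayed hypotheses is then violated. First I would record the three structural facts that drive everything. Since $u$ already has degree $k$ in $G$ and $H$ is $k$-regular with $G\subseteq H$, no further edge of $H$ can be incident with $u$, so $N_H(u)=N_G(u)$. Passing from $G$ to $H$ only adds edges and hence only adds cycles, so $cyc(G,g,w)\le cyc(H,g,w)=\lambda$ for every vertex $w$ (the last equality because $H$ is vertex-girth-regular). Finally, exactly as in the Moore-tree arguments used in Proposition~\ref{prop:elementalProperties}, every girth-cycle through $u$ passes through precisely two neighbours of $u$: the two vertices adjacent to $u$ on the cycle are neighbours, while any further neighbour of $u$ lying on the cycle would be a chord at $u$, creating a cycle shorter than $g$.

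For the first hypothesis I would double count incidences between girth-cycles and neighbours of $u$. Writing $S_G=\sum_{u'\in N_G(u)}cyc(G,g,u')$, each girth-cycle of $G$ through $u$ is counted exactly twice in $S_G$ (once for each of its two neighbours of $u$), while each girth-cycle of $G$ avoiding $u$ contributes a nonnegative number of incidences; denoting the total of the latter by $N_G\ge 0$ gives $S_G=2\,cyc(G,g,u)+N_G$. The same bookkeeping in $H$ yields $k\lambda=2\lambda+N_H$, i.e. $N_H=(k-2)\lambda$. Since every girth-cycle of $G$ avoiding $u$ is also a girth-cycle of $H$ avoiding $u$, with the same neighbour-incidences, we get $N_G\le N_H=(k-2)\lambda$, whence $S_G-2\,cyc(G,g,u)=N_G\le(k-2)\lambda$. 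This is exactly the negation of the first hypothesis, giving the contradiction in that case.

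For the simultaneous pair the key observation is that, because every $cyc(G,g,u')\le\lambda$, the inequality $(2-k)\lambda+S_G-2\max_{u'}cyc(G,g,u')\ge 0$ forces (for $k\ge 3$) every neighbour of $u$ already to lie on exactly $\lambda$ girth-cycles: writing $M_G$ for the maximum and bounding the remaining $k-1$ summands of $S_G$ each by $M_G$ gives $(k-2)M_G\ge(k-2)\lambda$, so $M_G=\lambda$, and then all $cyc(G,g,u')=\lambda$. In other words, the first simultaneous condition is precisely the statement that the neighbourhood of $u$ is already \emph{saturated}. But then no neighbour of $u$ can acquire a new girth-cycle in $H$, and since any girth-cycle through $u$ that is new in $H$ would pass through two neighbours and hence be a new girth-cycle at each of them, $u$ itself acquires no new girth-cycle either; thus $cyc(H,g,u)=cyc(G,g,u)$. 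Substituting $M_G=\lambda$ and $S_G=k\lambda$ into the second condition collapses it to $cyc(G,g,u)<\lambda$, which together with $cyc(H,g,u)=\lambda$ contradicts $cyc(H,g,u)=cyc(G,g,u)$.

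The step I expect to require the most care is this second case: one must both interpret the first simultaneous inequality correctly (as saturation of $N_G(u)$, using $cyc(G,g,u')\le\lambda$ and $k\ge 3$) and justify rigorously that a \emph{new} girth-cycle through $u$ is genuinely new at two neighbours, which relies on the chord-free two-neighbour fact and on monotonicity of cycle counts under edge addition. I would also remark that the same bookkeeping, combined with the elementary bound on the number of edges of a loopless multigraph with prescribed degree caps (the cycles through $u$ form such a multigraph on $N_G(u)$ with the neighbour $u'$ capped by $\lambda-cyc(G,g,u')$), in fact yields the stronger conclusion with $\max$ replaced by $\min$; the formulation with $\max$ together with the saturation hypothesis is what makes the second case a clean one-liner and is already sufficient for the pruning rule.
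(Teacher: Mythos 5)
Your proof is correct, and it follows the paper's overall strategy (assume $G$ sits inside a $vgr(v,k,g,\lambda)$-graph, use that no new edge can appear at $u$, and count girth-cycles through $u$ against the residual capacities $\lambda-cyc(G,g,u')$ of its neighbours), but the execution of the second case is genuinely different. For the first inequality your incidence double count in $G$ versus $H$ is the same computation as the paper's, which instead counts the $\lambda-cyc(G,g,u)$ ``new'' girth-cycles through $u$, notes each uses at least two neighbours, and bounds the number through each neighbour $u'$ by $\lambda-cyc(G,g,u')$. For the simultaneous pair, the paper stays at the level of inequalities: from the first hypothesis it deduces $\max_{u'}(\lambda-cyc(G,g,u'))\geq\sum_{u'}(\lambda-cyc(G,g,u'))-\max_{u'}(\lambda-cyc(G,g,u'))$, bounds the number of new girth-cycles through $u$ by the right-hand side (every new cycle must use a neighbour other than the one of maximum residual capacity), and contradicts the second hypothesis. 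You instead extract the structural meaning of the first hypothesis: with $cyc(G,g,u')\leq\lambda$ and $k\geq 3$ it forces saturation, $cyc(G,g,u')=\lambda$ for every neighbour, whence no new girth-cycle can pass through $u$ at all and the second hypothesis collapses to $cyc(G,g,u)<\lambda$, an immediate contradiction. Your version is cleaner and more transparent; the paper's version has the minor advantage of not needing $k\geq 3$ (you flag this assumption explicitly, and it is harmless since the algorithm only applies the rule for $k\geq 3$). Your closing remark is also correct and goes beyond the paper: the loopless-multigraph bound holds unconditionally, giving $\lambda-cyc(G,g,u)\leq\sum_{u'}(\lambda-cyc(G,g,u'))-\max_{u'}(\lambda-cyc(G,g,u'))$ for any such subgraph, which is precisely the statement that the third displayed quantity with $\max_{u'}cyc(G,g,u')$ replaced by $\min_{u'}cyc(G,g,u')$ is nonnegative; since that quantity is at most the displayed one, the second displayed condition alone already certifies non-embeddability, so the first simultaneous condition is logically redundant (including it merely weakens the pruning rule, it does not affect correctness).
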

\begin{proof}
Suppose for the sake of obtaining a contradiction that $u \in V(G)$ is vertex with degree $k$ satisfying the first or the second condition from the proposition and $G$ occurs as a subgraph of a $vgr(v,k,g,\lambda)$-graph $G'$. There are precisely $\lambda-cyc(G,g,u)$ cycles of length $g$ in $G'$ which contain $u$ and contain some edge from $E(G') \setminus E(G)$ (i.e., they are not cycles of $G$). Clearly, each of these cycles contains at least two neighbors of $u$. Since each vertex in $G'$ is contained in precisely $\lambda$ girth-cycles, this implies that $2(\lambda-cyc(G,g,u)) \leq \sum_{u' \in N_G(u)}(\lambda-cyc(G,g,u'))$ and thus $(k-2)\lambda+2cyc(G,g,u)-\sum_{u' \in N_G(u)} cyc(G,g,u') \geq 0$. Hence, if the first condition of the lemma is satisfied, we immediately obtain a contradiction. So let us assume that the second condition holds. Since
$(2-k)\lambda+\sum_{u' \in N_G(u)} cyc(G,g,u')-2\max_{u' \in N_G(u)}(cyc(G,g,u')) \geq 0$, we have
\begin{align*}
    \max_{u' \in N_G(u)}(\lambda-cyc(G,g,u'))\\
    \geq \sum_{u' \in N_G(u)} (\lambda-cyc(G,g,u'))-\max_{u' \in N_G(u)}(\lambda-cyc(G,g,u')).
\end{align*}
Since each girth-cycle containing $u$ in $G'$ contains at least two neighbors of $u$, this in turn implies that there are at most $\sum_{u' \in N_G(u)} (\lambda-cyc(G,g,u'))-\max_{u' \in N_G(u)}(\lambda-cyc(G,g,u'))$ girth-cycles in $G'$ containing $u$, which are not cycles of $G$. Hence, we have $\lambda-cyc(G,g,u) \leq \sum_{u' \in N_G(u)} (\lambda-cyc(G,g,u'))-\max_{u' \in N_G(u)}(\lambda-cyc(G,g,u'))$ and so
$$(k-2)\lambda+cyc(G,g,u)-\sum_{u' \in N_G(u)} cyc(G,g,u')+\max_{u' \in N_G(u)}(cyc(G,g,u')) \geq 0.$$
We again obtain a contraction by assuming that the second condition holds.
\end{proof}

\section{Computational lower and upper bounds for $n(k,g,\lambda)$}
We implemented the algorithm from Section~\ref{sec:generationAlgo} to exhaustively generate vertex-girth-regular graphs and used it to obtain lower and upper bounds for $n(k,g,\lambda)$. More specifically, if the algorithm does not generate any $vgr(v',k,g,\lambda)$-graphs for all $v'<v$, then $n(k,g,\lambda) \geq v$ since the algorithm is exhaustive. On the other hand, if the algorithm generates at least one $vgr(v,k,g,\lambda)$-graph, then clearly $n(k,g,\lambda) \leq v$. We ran the algorithm for $k=3$, $3 \leq g \leq 8$ and $k=4$, $3 \leq g \leq 6$. The bounds that we obtained from the computations described in this section are summarized in Tables~\ref{tab:comparison3Reg} (cubic case) and~\ref{tab:comparison4Reg} (quartic case). Bold values indicate cases where the lower bound is equal to the upper bound.

Apart from running the algorithm from Section~\ref{sec:generationAlgo}, we also ran the algorithm GENREG~\cite{Me99} for generating all connected $k$-regular graphs of girth $g$ on $v$ vertices and filtered out those graphs which are vertex-girth-regular. The lower bounds that we obtained in this way could sometimes also be further improved by applying Observation~\ref{obs:divisibility} (i.e., the order $v$ must be such that $v\lambda$ is a multiple of the girth $g$). Moreover, we also applied the lower bounds from Section~\ref{sec:LBounds} and filled the table with best lower bounds among the previously discussed methods.

For improving the upper bounds in cases where our algorithm and GENREG were unable to find any graphs, we also filtered out vertex-girth-regular graphs from known lists of regular graphs (with additional symmetry properties). More specifically, we consulted the list of all vertex-transitive graphs until order 47~\cite{HR20}, cubic vertex-transitive graphs until order 1280~\cite{PSV13}, cubic arc-transitive graphs until order 2048~\cite{CD02} and quartic arc-transitive graphs until order 640~\cite{PSV13,PSV15}. Additionally, we also applied the construction from Proposition~\ref{prop:generalizedTruncation} to obtain upper bounds. The total CPU-time for all computations in this paper amounts to approximately 2 CPU-years (the computations were executed on the hardware of the Flemish Supercomputer Center). We make all code and data related to this paper publicly available at \url{https://github.com/JorikJooken/vertexGirthRegularGraphs}. The graphs that we found can also be downloaded from the House of Graphs~\cite{CDG23} by searching for the term “vertex-girth-regular”.

We now briefly discuss some observations and remarkable graphs that we found based on these computations. From Tables~\ref{tab:comparison3Reg} and~\ref{tab:comparison4Reg} it is clear that often $n(k,g,\lambda) \geq n(k,g,\lambda+1)$, as one could intuitively expect (e.g. Moore graphs occur for the maximal value of $\lambda$). However, this inequality does not always hold. For example, we showed that $n(3,8,8)=42<n(3,8,9)=48$. The corresponding graphs achieving these bounds are shown in Fig.~\ref{fig:42and48Vertices}. These two graphs are also among the largest graphs for which we were able to prove that they are extremal.

\begin{figure}[h!]
\begin{center}
\begin{tikzpicture}[scale=0.95]
  \def\sides{42}
  \def\radius{3}

  \foreach \i in {1,...,\sides} {
    \fill ({360/\sides * \i}:\radius) circle (1.5pt);
  }
  
  \foreach \i in {1,4,7,...,40} {
    \draw ({360/\sides * (\i + 1)}:\radius) -- ({360/\sides * \i}:\radius);
    \draw ({360/\sides * (\i + 8)}:\radius) -- ({360/\sides * \i}:\radius);
    \draw ({360/\sides * (\i + 41)}:\radius) -- ({360/\sides * \i}:\radius);
  }
  
    \foreach \i in {2,5,8,...,41} {
    \draw ({360/\sides * (\i + 1)}:\radius) -- ({360/\sides * \i}:\radius);
    \draw ({360/\sides * (\i + 21)}:\radius) -- ({360/\sides * \i}:\radius);
    \draw ({360/\sides * (\i + 41)}:\radius) -- ({360/\sides * \i}:\radius);
  }

    \foreach \i in {3,6,9,...,42} {
    \draw ({360/\sides * (\i + 1)}:\radius) -- ({360/\sides * \i}:\radius);
    \draw ({360/\sides * (\i + 34)}:\radius) -- ({360/\sides * \i}:\radius);
    \draw ({360/\sides * (\i + 41)}:\radius) -- ({360/\sides * \i}:\radius);
  }
\end{tikzpicture} \quad
\begin{tikzpicture}[scale=0.95]
  \def\sides{48}
  \def\radius{3}

  \foreach \i in {1,...,\sides} {
    \fill ({360/\sides * \i}:\radius) circle (1.5pt);
  }
  
  \foreach \i in {1,5,9,...,45} {
    \draw ({360/\sides * (\i + 1)}:\radius) -- ({360/\sides * \i}:\radius);
    \draw ({360/\sides * (\i + 29)}:\radius) -- ({360/\sides * \i}:\radius);
    \draw ({360/\sides * (\i + 47)}:\radius) -- ({360/\sides * \i}:\radius);
  }

  \foreach \i in {2,6,10,...,46} {
    \draw ({360/\sides * (\i + 1)}:\radius) -- ({360/\sides * \i}:\radius);
    \draw ({360/\sides * (\i + 19)}:\radius) -- ({360/\sides * \i}:\radius);
    \draw ({360/\sides * (\i + 47)}:\radius) -- ({360/\sides * \i}:\radius);
  }

  \foreach \i in {3,7,11,...,47} {
    \draw ({360/\sides * (\i + 1)}:\radius) -- ({360/\sides * \i}:\radius);
    \draw ({360/\sides * (\i + 41)}:\radius) -- ({360/\sides * \i}:\radius);
    \draw ({360/\sides * (\i + 47)}:\radius) -- ({360/\sides * \i}:\radius);
  }

  \foreach \i in {4,8,12,...,48} {
    \draw ({360/\sides * (\i + 1)}:\radius) -- ({360/\sides * \i}:\radius);
    \draw ({360/\sides * (\i + 7)}:\radius) -- ({360/\sides * \i}:\radius);
    \draw ({360/\sides * (\i + 47)}:\radius) -- ({360/\sides * \i}:\radius);
  }
\end{tikzpicture}
\end{center}
\caption{An extremal $vgr(42,3,8,8)$-graph (left) and an extremal $vgr(48,3,8,9)$-graph (right).}\label{fig:42and48Vertices} 
\end{figure}
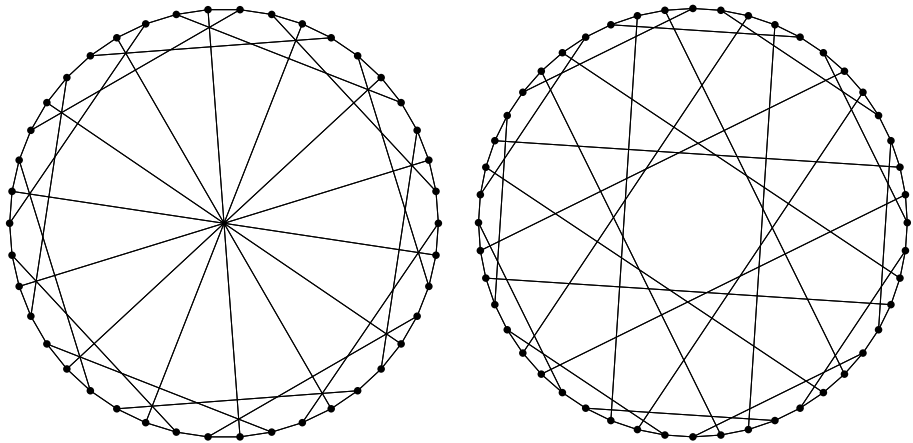

We also remark that several famous graphs appear as (extremal) vertex-girth-regular graphs. Among others we mention all Moore graphs and several cages~\cite{EJ08}, the Platonic solids, several incidence graphs, the Pappus graph (a $vgr(18,3,6,6)$-graph), the Coxeter graph (a $vgr(28,3,7,6)$-graph), the burnt pancake graph BP(3) (a $vgr(48,3,8,6)$-graph) and the generalized Petersen graph $G(13,5)$, which is a $vgr(26,3,7,7)$-graph (see Fig.~\ref{fig:generalisedPetersen}) and appears for example in~\cite{CGJ23} as the cubic graph of girth 7 on 26 vertices with the most connected induced subgraphs among all such graphs. 
\begin{figure}[ht]
\centering
\begin{tikzpicture}[scale=0.65]
\foreach \x in {0,1,...,12}{
\draw[fill] (\x*360/13:4.5) circle (1.8pt);
\draw[fill] (\x*360/13:3) circle (1.8pt);
\draw (\x*360/13:3)--(\x*360/13:4.5);
\draw (\x*360/13+360/13:4.5)--(\x*360/13:4.5);
\draw (\x*360/13+1800/13:3)--(\x*360/13:3);
}
\end{tikzpicture}
\caption{The generalized Petersen graph $G(13,5)$ is a $vgr(26,3,7,7)$-graph.}\label{fig:generalisedPetersen}
\end{figure}
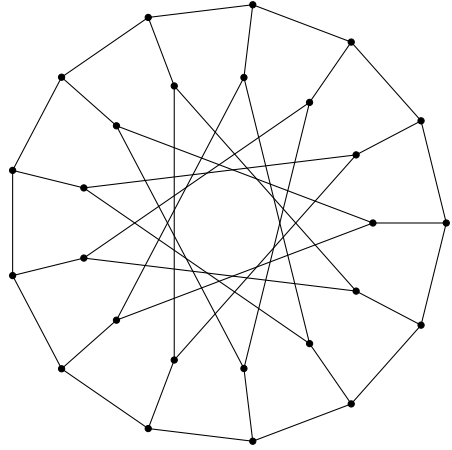

For the cage problem, all known cages of even girth are bipartite and an important open question asks whether this is always the case~\cite{EJ08}. We remark that there are several non-bipartite extremal vertex-girth-regular graphs of even girth (e.g. the $vgr(42,3,8,8)$-graph shown in Fig.~\ref{fig:42and48Vertices} is such an example). Hence, the analogous question for vertex-girth-regular graphs has a negative answer. In fact there are plenty of non-bipartite extremal vertex-girth-regular graphs of even girth, for example 25 pairwise non-isomorphic $vgr(20,4,4,1)$-graphs.

In Section~\ref{sec:introduction} we mentioned that several subclasses of vertex-girth-regular graphs have received attention before in the literature (e.g. vertex-transitive graphs~\cite{HR20}, edge-girth-regular graphs \cite{DFJR21,JKM18} and graphs in which each vertex has the same signature \cite{PV19}). We remark that many of the extremal vertex-girth-regular graphs that we found do not belong to any of these subclasses. For example, the graph shown in Fig.~\ref{fig:25Vertices} is an extremal vertex-girth-regular graph in which 20 vertices have signature $\{5,4,4,3\}$ and 5 vertices have signature $\{4,4,4,4\}$. In total, we found 98 extremal vertex-girth-regular graphs which were not vertex-transitive (91 of these were not edge-girth-regular and 47 of these contained vertices with different signatures). Another interesting example to mention is again related to the cage problem. The smallest known cubic graph of girth 13 has 272 vertices~\cite{EJ08}. This graph is not edge-girth-regular, but it is vertex-girth-regular (and in fact even vertex-transitive; every vertex has signature $\{18,18,16\}$).
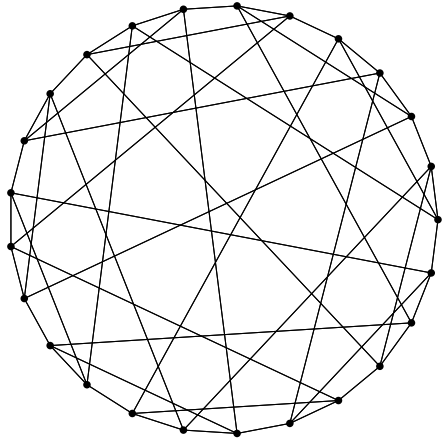
\begin{figure}[h!]
\begin{center}
\begin{tikzpicture}[scale=0.95]
  \def\sides{25}
  \def\radius{3}

  \foreach \i in {1,...,\sides} {
    \fill ({360/\sides * \i}:\radius) circle (1.5pt);
  }
  
  \foreach \i in {1,6,...,21} {
    \draw ({360/\sides * (\i + 1)}:\radius) -- ({360/\sides * \i}:\radius);
    \draw ({360/\sides * (\i + 17)}:\radius) -- ({360/\sides * \i}:\radius);
    \draw ({360/\sides * (\i + 21)}:\radius) -- ({360/\sides * \i}:\radius);
    \draw ({360/\sides * (\i + 24)}:\radius) -- ({360/\sides * \i}:\radius);
  }

  \foreach \i in {2,7,...,22} {
    \draw ({360/\sides * (\i + 1)}:\radius) -- ({360/\sides * \i}:\radius);
    \draw ({360/\sides * (\i + 4)}:\radius) -- ({360/\sides * \i}:\radius);
    \draw ({360/\sides * (\i + 12)}:\radius) -- ({360/\sides * \i}:\radius);
    \draw ({360/\sides * (\i + 24)}:\radius) -- ({360/\sides * \i}:\radius);
  }

  \foreach \i in {3,8,...,23} {
    \draw ({360/\sides * (\i + 1)}:\radius) -- ({360/\sides * \i}:\radius);
    \draw ({360/\sides * (\i + 8)}:\radius) -- ({360/\sides * \i}:\radius);
    \draw ({360/\sides * (\i + 17)}:\radius) -- ({360/\sides * \i}:\radius);
    \draw ({360/\sides * (\i + 24)}:\radius) -- ({360/\sides * \i}:\radius);
  }

  \foreach \i in {4,9,...,24} {
    \draw ({360/\sides * (\i + 1)}:\radius) -- ({360/\sides * \i}:\radius);
    \draw ({360/\sides * (\i + 13)}:\radius) -- ({360/\sides * \i}:\radius);
    \draw ({360/\sides * (\i + 21)}:\radius) -- ({360/\sides * \i}:\radius);
    \draw ({360/\sides * (\i + 24)}:\radius) -- ({360/\sides * \i}:\radius);
  }

  \foreach \i in {5,10,...,25} {
    \draw ({360/\sides * (\i + 1)}:\radius) -- ({360/\sides * \i}:\radius);
    \draw ({360/\sides * (\i + 4)}:\radius) -- ({360/\sides * \i}:\radius);
    \draw ({360/\sides * (\i + 8)}:\radius) -- ({360/\sides * \i}:\radius);
    \draw ({360/\sides * (\i + 24)}:\radius) -- ({360/\sides * \i}:\radius);
  }
\end{tikzpicture} 
\end{center}
\caption{An extremal $vgr(25,4,5,8)$-graph for which 20 vertices have signature $\{5,4,4,3\}$ and 5 vertices have signature $\{4,4,4,4\}$.}\label{fig:25Vertices} 
\end{figure}

\subsection{Sanity checks}
The bounds obtained in this section rely on the outcome of the author's implementation of the algorithm from Section~\ref{sec:generationAlgo}. Therefore, it is very important to take extra measures to ensure the correctness of our implementation, since an incorrect implementation would invalidate the obtained bounds. We first remark that, as expected, the bounds obtained by all different methods described in the previous paragraphs are in complete agreement with each other. Moreover, we also compared the outcome of our algorithm with the outcome of filtering the vertex-girth-regular graphs from all graphs produced by GENREG for orders that are larger than $n(k,g,\lambda)$ (where often there are many $vgr(v,k,g,\lambda)$-graphs for a fixed $v$) and obtained exactly the same graphs in each case. We were also able to independently find many graphs that belong to subclasses of vertex-girth-regular that received attention in the literature before~\cite{GJ24,PV19}. 

\begin{table}[htbp]
    \centering
    \begin{subtable}{0.40\textwidth}
        \centering
 \renewcommand{\arraystretch}{0.9}
    {\footnotesize
    \begin{tabular}{|c| c | c | c | c |}
        \hline
        $k$ & $g$ & $\lambda$ & $n(k,g,\lambda) \geq$ & $n(k,g,\lambda) \leq$\\
        \hline
        3&3&1&$\mathbf{6}$&$\mathbf{6}$\\
        3&3&2&$\bm{\infty}$&$\bm{\infty}$\\
        3&3&3&$\mathbf{4}$&$\mathbf{4}$\\
        \hline
        3&4&1&$\mathbf{12}$&$\mathbf{12}$\\
        3&4&2&$\mathbf{8}$&$\mathbf{8}$\\
        3&4&3&$\mathbf{8}$&$\mathbf{8}$\\
        3&4&4&30&$\infty$\\
        3&4&5&$\bm{\infty}$&$\bm{\infty}$\\
        3&4&6&$\mathbf{6}$&$\mathbf{6}$\\
        \hline
        3&5&1&$\mathbf{20}$&$\mathbf{20}$\\
        3&5&2&$\mathbf{20}$&$\mathbf{20}$\\
        3&5&3&$\mathbf{20}$&$\mathbf{20}$\\
        3&5&4&40&$\infty$\\
        3&5&5&32&$\infty$\\
        3&5&6&$\mathbf{10}$&$\mathbf{10}$\\
        \hline
        3&6&1&$\mathbf{24}$&$\mathbf{24}$\\
        3&6&2&$\mathbf{24}$&$\mathbf{24}$\\
        3&6&3&$\mathbf{24}$&$\mathbf{24}$\\
        3&6&4&$\mathbf{24}$&$\mathbf{24}$\\
        3&6&5&36&$\infty$\\
        3&6&6&$\mathbf{18}$&$\mathbf{18}$\\
        3&6&7&$\mathbf{18}$&$\mathbf{18}$\\
        3&6&8&36&$\infty$\\
        3&6&9&$\mathbf{16}$&$\mathbf{16}$\\
        3&6&10&36&$\infty$\\
        3&6&11&$\bm{\infty}$&$\bm{\infty}$\\
        3&6&12&$\mathbf{14}$&$\mathbf{14}$\\
        \hline
    \end{tabular}
    }
        \caption{$3 \leq g \leq 6$}
        \label{tab:table1}
    \end{subtable}
    \hspace{0.1\textwidth} 
    \begin{subtable}{0.40\textwidth}
        \centering
            \renewcommand{\arraystretch}{0.9}
            {\footnotesize
\begin{tabular}{|c| c | c | c | c |}
        \hline
        $k$ & $g$ & $\lambda$ & $n(k,g,\lambda) \geq$ & $n(k,g,\lambda) \leq$\\
        \hline
        3&7&1&42&56\\
        3&7&2&42&$\infty$\\
        3&7&3&42&56\\
        3&7&4&42&$\infty$\\
        3&7&5&42&$\infty$\\
        3&7&6&$\mathbf{28}$&$\mathbf{28}$\\
        3&7&7&$\mathbf{26}$&$\mathbf{26}$\\
        3&7&8&42&$\infty$\\
        3&7&9&42&$\infty$\\
        3&7&10&42&$\infty$\\
        3&7&11&$\bm{\infty}$&$\bm{\infty}$\\
        3&7&12&$\bm{\infty}$&$\bm{\infty}$\\
        \hline
        3&8&1&56&64\\
        3&8&2&52&64\\
        3&8&3&48&64\\
        3&8&4&48&50\\
        3&8&5&48&64\\
        3&8&6&$\mathbf{48}$&$\mathbf{48}$\\
        3&8&7&48&$\infty$\\
        3&8&8&$\mathbf{42}$&$\mathbf{42}$\\
        3&8&9&$\mathbf{48}$&$\mathbf{48}$\\
        3&8&10&$\mathbf{44}$&$\mathbf{44}$\\
        3&8&11&$\mathbf{40}$&$\mathbf{40}$\\
        3&8&12&$\mathbf{40}$&$\mathbf{40}$\\
        3&8&13&48&$\infty$\\
        3&8&14&48&$\infty$\\
        3&8&15&48&$\infty$\\
        3&8&16&48&$\infty$\\
        3&8&17&48&$\infty$\\
        3&8&18&48&$\infty$\\
        3&8&19&48&$\infty$\\
        3&8&20&48&$\infty$\\
        3&8&21&48&$\infty$\\
        3&8&22&48&$\infty$\\
        3&8&23&$\bm{\infty}$&$\bm{\infty}$\\
        3&8&24&$\mathbf{30}$&$\mathbf{30}$\\
        \hline
    \end{tabular}
    }
        \caption{$7 \leq g \leq 8$}
        \label{tab:3RegTable2}
    \end{subtable}
    \caption{An overview of the best lower and upper bounds for $n(3,g,\lambda)$.}
    \label{tab:comparison3Reg}
\end{table}

\begin{table}[htbp]
    \centering
    \begin{subtable}{0.40\textwidth}
        \centering
 \renewcommand{\arraystretch}{0.9}
    {\footnotesize
    \begin{tabular}{|c| c | c | c | c |}
        \hline
        $k$ & $g$ & $\lambda$ & $n(k,g,\lambda) \geq$ & $n(k,g,\lambda) \leq$\\
        \hline
        4&3&1&$\mathbf{9}$&$\mathbf{9}$\\
        4&3&2&$\mathbf{9}$&$\mathbf{9}$\\
        4&3&3&$\mathbf{7}$&$\mathbf{7}$\\
        4&3&4&$\mathbf{6}$&$\mathbf{6}$\\
        4&3&5&$\bm{\infty}$&$\bm{\infty}$\\
        4&3&6&$\mathbf{5}$&$\mathbf{5}$\\
        \hline
        4&4&1&$\mathbf{20}$&$\mathbf{20}$\\
        4&4&2&$\mathbf{18}$&$\mathbf{18}$\\
        4&4&3&$\mathbf{16}$&$\mathbf{16}$\\
        4&4&4&$\mathbf{13}$&$\mathbf{13}$\\
        4&4&5&$\mathbf{12}$&$\mathbf{12}$\\
        4&4&6&$\mathbf{14}$&$\mathbf{14}$\\
        4&4&7&$\mathbf{24}$&$\mathbf{24}$\\
        4&4&8&$\mathbf{11}$&$\mathbf{11}$\\
        4&4&9&$\mathbf{12}$&$\mathbf{12}$\\
        4&4&10&$\mathbf{10}$&$\mathbf{10}$\\
        4&4&11&24&$\infty$\\
        4&4&12&$\mathbf{10}$&$\mathbf{10}$\\
        4&4&13&24&$\infty$\\
        4&4&14&22&$\infty$\\
        4&4&15&24&$\infty$\\
        4&4&16&$\bm{\infty}$&$\bm{\infty}$\\
        4&4&17&$\bm{\infty}$&$\bm{\infty}$\\
        4&4&18&$\mathbf{8}$&$\mathbf{8}$\\
        \hline
        4&5&1&35&420\\
        4&5&2&$\mathbf{30}$&$\mathbf{30}$\\
        4&5&3&30&40\\
        4&5&4&$\mathbf{30}$&$\mathbf{30}$\\
        4&5&5&28&30\\
        4&5&6&30&55\\
        4&5&7&30&$\infty$\\
        4&5&8&$\mathbf{25}$&$\mathbf{25}$\\
        4&5&9&$\mathbf{25}$&$\mathbf{25}$\\
        4&5&10&$\mathbf{24}$&$\mathbf{24}$\\
        4&5&11&$\mathbf{30}$&$\mathbf{30}$\\
        4&5&12&$\mathbf{20}$&$\mathbf{20}$\\
        4&5&13&30&$\infty$\\
        4&5&14&30&$\infty$\\
        4&5&15&28&$\infty$\\
        4&5&16&30&$\infty$\\
        4&5&17&30&$\infty$\\
        4&5&18&$\bm{\infty}$&$\bm{\infty}$\\
        \hline
    \end{tabular}
    }
        \caption{$3 \leq g \leq 5$}
        \label{tab:4RegTable1}
    \end{subtable}
    \hspace{0.1\textwidth} 
    \begin{subtable}{0.40\textwidth}
        \centering
            \renewcommand{\arraystretch}{0.9}
            {\footnotesize
\begin{tabular}{|c| c | c | c | c |}
        \hline
        $k$ & $g$ & $\lambda$ & $n(k,g,\lambda) \geq$ & $n(k,g,\lambda) \leq$\\
        \hline
        4&6&1&54&1152\\
        4&6&2&51&84\\
        4&6&3&50&1152\\
        4&6&4&48&96\\
        4&6&5&48&$\infty$\\
        4&6&6&46&60\\
        4&6&7&48&648\\
        4&6&8&45&60\\
        4&6&9&44&512\\
        4&6&10&42&81\\
        4&6&11&42&$\infty$\\
        4&6&12&40&64\\
        4&6&13&42&$\infty$\\
        4&6&14&39&60\\
        4&6&15&38&$\infty$\\
        4&6&16&39&48\\
        4&6&17&42&$\infty$\\
        4&6&18&$\mathbf{35}$&$\mathbf{35}$\\
        4&6&19&36&$\infty$\\
        4&6&20&36&48\\
        4&6&21&36&44\\
        4&6&22&36&42\\
        4&6&23&36&42\\
        4&6&24&36&40\\
        4&6&25&36&42\\
        4&6&26&36&42\\
        4&6&27&36&38\\
        4&6&28&36&42\\
        4&6&29&36&$\infty$\\
        4&6&30&$\mathbf{35}$&$\mathbf{35}$\\
        4&6&31&$\mathbf{36}$&$\mathbf{36}$\\
        4&6&32&$\mathbf{36}$&$\mathbf{36}$\\
        4&6&33&$\mathbf{36}$&$\mathbf{36}$\\
        4&6&34&$\mathbf{36}$&$\mathbf{36}$\\
        4&6&35&36&$\infty$\\
        4&6&36&$\mathbf{32}$&$\mathbf{32}$\\
        4&6&37&36&$\infty$\\
        4&6&38&36&$\infty$\\
        4&6&39&$\mathbf{32}$&$\mathbf{32}$\\
        4&6&40&36&$\infty$\\
        4&6&41&36&$\infty$\\
        4&6&42&$\mathbf{30}$&$\mathbf{30}$\\
        4&6&43&$\mathbf{30}$&$\mathbf{30}$\\
        4&6&44&36&$\infty$\\
        4&6&45&36&$\infty$\\
        4&6&46&36&$\infty$\\
        4&6&47&36&$\infty$\\
        4&6&48&$\mathbf{28}$&$\mathbf{28}$\\
        4&6&49&36&$\infty$\\
        4&6&50&36&$\infty$\\
        4&6&51&36&$\infty$\\
        4&6&52&$\bm{\infty}$&$\bm{\infty}$\\
        4&6&53&$\bm{\infty}$&$\bm{\infty}$\\
        4&6&54&$\mathbf{26}$&$\mathbf{26}$\\
        \hline
    \end{tabular}
    }
        \caption{$g=6$}
        \label{tab:4RegTable2}
    \end{subtable}
    \caption{An overview of the best lower and upper bounds for $n(4,g,\lambda)$.}
    \label{tab:comparison4Reg}
\end{table}

\section{Conclusion}
Throughout our paper, we have repeatedly referred to the tables below. They can be simultaneously viewed as a list of best results obtained in the area so far, but also as a source of ideas and inspiration.

For example, consulting the information concerning $ n(3,7,3) $, we see a gap between the lower bound of $42$ 
on the order of any $vgr(n,3,7,3)$-graph and the order of a smallest
$ vgr(n,3,7,3) $-graph found to day; equal to $56$. While we do not know whether a smaller $ vgr(n,3,7,3) $-graph exists, we note that the $ vgr(56,3,7,3) $-graph is a celebrated graph in topological graph theory. It is the underlying graph of the {\em Klein map}, a regular polyhedron of type $(7,3)$ and genus $3$, whose polyhedral representation was described by Schulte and Wills in 1985 \cite{SW85}. The relevance of the polyhedral representation lies in the fact
that locally each vertex $v$ of the map is adjacent to three faces of the polyhedron, all of which are of length $7$ and their borders
constitute the only $7$-cycles passing through $v$. Since $7$ is also the girth of the underlying graph of the Klein map, the graph represents a specific example of a connection between $vgr(n,k,g,k)$-graphs and maps of type
$(g,k)$. This connection certainly deserves further investigation.

\section*{Acknowledgements}
\noindent The computational resources and services used in this work were provided by the VSC (Flemish Supercomputer Centre), funded by the Research Foundation Flanders (FWO) and the Flemish Government - Department EWI. Robert Jajcay is partially supported by VEGA 1/0437/23 and SK-AT-23-0019. Jorik Jooken is supported by a Postdoctoral Fellowship of the Research Foundation Flanders (FWO) with grant number 1222524N. The research of István Porupsánszki was supported in part by the Hungarian National Research, Development and Innovation Office  OTKA grant no. SNN 132625. The authors are also grateful to Jan Goedgebeur for helping to make the graphs publicly available on the House of Graphs.
	
\end{document}